\documentclass[11pt]{amsart}

\usepackage[T1]{fontenc}
\usepackage{lmodern}
\usepackage{microtype}
\usepackage{amsmath,amssymb,mathtools}
\usepackage{enumitem}
\usepackage[colorlinks=true,linkcolor=blue,citecolor=blue,urlcolor=blue]{hyperref}
\usepackage[nameinlink,capitalise,noabbrev]{cleveref}

\newtheorem{theorem}{Theorem}[section]
\newtheorem{proposition}[theorem]{Proposition}
\newtheorem{lemma}[theorem]{Lemma}

\newtheorem{observation}[theorem]{Observation}
\theoremstyle{remark}
\newtheorem{remark}[theorem]{Remark}
\newtheorem{question}[theorem]{Question}

\newcommand{\F}{\mathbb F}
\newcommand{\Z}{\mathcal Z}
\newcommand{\Bas}{\mathcal B}
\newcommand{\bn}{\operatorname{bn}}
\newcommand{\eg}{\operatorname{eg}}
\newcommand{\ch}{\operatorname{ch}}
\newcommand{\cng}{\operatorname{cong}}

\usepackage{thmtools,thm-restate}

\title{Logarithmic basis number of graphs and regular matroids}
\author{Kolja Knauer}
\date{}
\address{Departament de Matemàtiques i Informàtica, Universitat de Barcelona, Barcelona, Spain, Centre de Recerca Matemàtica (CRM), Campus de Bellaterra, Edifici C, 08193 Bellaterra, Barcelona, Spain}
\email{kolja.knauer@ub.edu}

\begin{document}

\nocite{BanksSchmeichel1982,BazarganiEtAl2026,ChekuriVondrakZenklusen2010,FreedmanHastings2021,GenietGiocanti2026,GurjarVishnoi2021,Heller1957,KavithaEtAl2009,MacLane1937,MacWilliamsSloane1977,LehnerMiraftab2026,Longyear1980,MilgramUngar1977,MiraftabMorinYuditsky2026,MoharThomassen2001,RichterShank1984,Rizzi2009,Schmeichel1981,Seymour1980,Varshamov1957,WangIrani2026,Wargo1996,Weiss1984}

\begin{abstract}
The basis number $\bn(G)$ of a graph $G$ is the minimum edge-congestion of a basis of its cycle space. We prove that every finite $n$-vertex multigraph satisfies $\bn(G)=O(\log n)$, resolving, for simple graphs, a question of Bazargani, Biedl, Bose, Maheshwari and Miraftab, subsequently stated as a conjecture by Miraftab, Morin and Yuditsky. The argument also yields the cycle-rank refinement $\bn(G)=O(\log \beta(G))$, where $\beta(G)$ is the dimension of the cycle space, and a reduction of Lehner and Miraftab, based on a theorem of Richter and Shank, then gives $\bn(G)=O(\log g)$ for graphs of Euler genus $g$. For regular matroids we prove the ground-set bound $\bn(M)=O(\log m)$, where $m=|E(M)|$, and logarithmic bounds in both the rank $r(M)$ and the cycle-space dimension $d=\dim(\Z(M))$. All these orders are best possible.
\end{abstract}

\maketitle

\section{Introduction}\label{sec:introduction}

Let $G=(V,E)$ be a finite graph and let $\Z(G)\subseteq\F_2^E$ denote its cycle space, the vector space of Eulerian edge sets. A \emph{cycle basis} is a basis of $\Z(G)$. If $\Bas$ is a family of subgraphs and $e\in E$, define
\[
\begin{aligned}
\ch_{\Bas}(e)&:=\bigl|\{B\in\Bas:e\in B\}\bigr|,\\
\cng(\Bas)&:=\max_{e\in E}\ch_{\Bas}(e).
\end{aligned}
\]
We set $\cng(\Bas)=0$ when $E=\varnothing$. We call $\cng(\Bas)$ the \emph{edge-congestion}, or simply the \emph{congestion}, of $\Bas$. The \emph{basis number} of $G$, introduced by Schmeichel~\cite{Schmeichel1981}, is
\[
\bn(G):=\min\{\cng(\Bas):\Bas\text{ is a cycle basis of }G\}.
\]
Mac Lane's planarity criterion~\cite{MacLane1937} says that $G$ is planar if and only if $\bn(G)\le2$.

The terminology in the recent literature is not completely uniform. Geniet and Giocanti~\cite{GenietGiocanti2026} use \emph{edge-congestion}; Bazargani et al.~\cite{BazarganiEtAl2026} call $\ch_{\Bas}(e)$ the \emph{charge} of $e$; Miraftab, Morin and Yuditsky~\cite{MiraftabMorinYuditsky2026} use \emph{ply}; and Wang and Irani~\cite{WangIrani2026} use \emph{maximum edge participation}. We use \emph{congestion} for the maximum over edges and \emph{charge} for the quantity $\ch_{\Bas}(e)$ of a single edge.

Prior to this work, the best general bound was $O(\log^2 n)$. Freedman and Hastings~\cite[Appendix~A]{FreedmanHastings2021}, in work motivated by quantum codes, gave a randomized decongestion construction yielding this bound for simple $n$-vertex graphs; it is recorded explicitly as Theorem~1 by Miraftab, Morin and Yuditsky~\cite{MiraftabMorinYuditsky2026}. Bazargani, Biedl, Bose, Maheshwari and Miraftab~\cite[Section~5]{BazarganiEtAl2026} asked whether every $n$-vertex graph has basis number $O(\log n)$, and Miraftab, Morin and Yuditsky subsequently stated precisely this as Conjecture~12 in~\cite{MiraftabMorinYuditsky2026}.

For bounded genus, Lehner and Miraftab~\cite{LehnerMiraftab2026} proved $O(\log^2 g)$ and recorded an $\Omega(\log g)$ lower bound, leaving only the exponent of the logarithm open; our genus result closes this gap. Geniet and Giocanti~\cite{GenietGiocanti2026} use the bounded-genus theorem in their excluded-minor result. Replacing $O(\log^2 g)$ by $O(\log g)$ does not improve their stated polynomial dependence on the excluded minor. We believe that replacing the $O(\log^2 n)$ bound used by Miraftab, Morin and Yuditsky~\cite[Lemma~9 and Theorem~3]{MiraftabMorinYuditsky2026} with our $O(\log n)$ bound could also sharpen some of their estimates for path decompositions.

Our first main result resolves the general $n$-vertex problem.

\begin{theorem}\label{thm:main-n}
There is an absolute constant $C$ such that every finite multigraph $G$ on $n\ge2$ vertices satisfies
$\bn(G)\le C\log n$.
\end{theorem}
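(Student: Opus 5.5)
We plan to prove the cycle-rank refinement $\bn(G)=O(\log\beta(G))$ and deduce the statement for $n\ge2$. First, reduce to a convenient class. Parallel edges and loops are handled directly: a loop is its own basis element of charge $1$, and a bundle of $k$ parallel edges between $u,v$ contributes the $k-1$ digons $\{e_1,e_i\}$. The charge of $e_1$ is then too large, so instead we use consecutive digons $\{e_i,e_{i+1}\}$, which have charge at most $2$. After this step we may work with a simple graph. We also contract degree-$2$ paths (charge is inherited along a path) and delete bridges, so that $G$ is $2$-edge-connected with minimum degree $3$. Then $\beta(G)=|E|-|V|+1\ge |V|/2$, so $\log\beta=\Theta(\log n)$ after reduction, and in general $\beta\le\binom n2$ gives $\log\beta=O(\log n)$.

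The core construction is a recursive one in the style of a low-diameter/expander decomposition. Fix a BFS tree $T$ from a root $r$ of a component. Moore-type girth bounds for minimum degree $3$ give a cycle of length $O(\log n)$ through any region of size $n$. More usefully, every graph of minimum degree $3$ has diameter-type ``short cycle'' structure: each vertex lies within distance $O(\log n)$ of a cycle of length $O(\log n)$. We aim to prove the following by induction.

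Claim: there is a cycle basis in which every edge has charge $O(\log n)$, and moreover each basis element is a cycle of length $O(\log n)$ plus \emph{at most one} long path, shared in a controlled way.

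Concretely, we repeatedly find a short cycle $C$ (length $\le 2\log_2 n+O(1)$), put it into the basis, delete one edge $e$ of $C$ (which reduces $\beta$ by one and keeps the span), and re-reduce degree-$2$ vertices. A suppressed degree-$2$ path becomes a single edge $f$ standing for a path $P_f$ in the original graph. Charges then propagate: a basis cycle using $f$ charges every original edge of $P_f$.

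The main obstacle is controlling this propagation. Naively, an original edge may sit inside nested suppressed paths and be reused by many later short cycles, and the resulting charge is not bounded by the $O(\log n)$ we want (this is essentially the source of the $\log^2$ loss in Freedman--Hastings). To defeat it, we intend to choose the deleted edge $e\in C$ greedily, namely the edge of $C$ whose current ``weight'' (the number of original edges it represents, or better a potential $\Phi(f)=\max$ charge accumulated along $P_f$) is largest. We then prove an amortized potential bound: every original edge's charge plus $c\log(\text{current }\beta)$ never increases. The justification is that the short cycle has only $O(\log\beta)$ edges, so each charge increment is paid for by the halving-type decrease of $\beta$ in the relevant component measure.

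If the potential argument fails globally, the fallback is to apply it to the edges of a single suppressed path, using a balanced (heavy-path) merging order. Merging paths in a balanced binary fashion, we expect each original edge to be absorbed into only $O(\log n)$ basis cycles, by the standard argument that an element's merge-class at least doubles each time it is charged. Combined with the reduction step, this would give $\bn(G)\le C\log n$.
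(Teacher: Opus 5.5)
There is a genuine gap. The whole difficulty of the theorem sits in the step you call ``the main obstacle,'' and neither argument you offer for it works.

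\textbf{The potential invariant is false.} You claim that $\ch_{\Bas}(e)+c\log\beta$ never increases for each original edge $e$. Each iteration deletes exactly one non-bridge edge, so $\beta$ drops by exactly one and $c\log\beta$ drops by only $c\log\frac{\beta}{\beta-1}=O(c/\beta)$. In the same iteration, every original edge on the chosen short cycle gains charge $1$, including all edges of the suppressed paths $P_f$ that the cycle uses. So the invariant fails as soon as $\beta\gg c$.

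\textbf{There is no halving.} Deleting one edge of a cycle never disconnects anything, so no component measure halves and nothing pays for the increments.

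\textbf{The greedy rule is not analyzed.} You delete the edge of maximal $\Phi$, but nothing you say prevents an edge of moderate load from repeatedly lying on chosen cycles next to a heavier edge that absorbs the deletion.

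\textbf{The fallback doubling argument conflates two unrelated events.} An original edge is charged whenever a chosen cycle passes through its current representative. That need not trigger any suppression involving that representative, so its merge class need not grow when it is charged. (Your structural claim ``short cycle plus at most one long path'' is likewise unsupported and never used.)

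\textbf{Your diagnosis of the Freedman--Hastings loss is also off.} Nesting of suppressed paths is harmless. Suppose the deleted edge of each short cycle $C$ is chosen uniformly at random. Each time an original edge is charged, its current representative is deleted with conditional probability $1/|C|=\Omega(1/\log n)$, after which it is never charged again. So its charge is dominated by a geometric variable of mean $O(\log n)$; this is exactly the observation in \cref{rem:fh-shortcut}. The extra logarithm comes from the geometric tail: a union bound over up to $\binom n2$ edges forces a threshold of order $\log^2 n$.

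\textbf{The missing idea is concentration.} You need a mechanism that turns per-edge expectation bounds into a single basis that is good at every edge simultaneously. The paper obtains the distribution from Rizzi's weighted bound (\cref{thm:rizzi}) via LP duality (\cref{lem:minimax}). It then uses that cycle bases are exactly the bases of the linear matroid $M_G$ on simple cycles. Swap rounding (\cref{lem:rounding}) preserves the marginals and gives Chernoff upper tails for each $\ch_{\Bas}(e)=|\Bas\cap A_e|$, where $A_e=\{C: e\in C\}$. A union bound then yields a basis of congestion $O(\log n+\ln m)=O(\log n)$.

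Your preliminary reductions are fine: consecutive digons for parallel classes, deleting bridges, and series reduction. Your scheme would become a proof if you randomized the deleted edge and then applied this rounding step to the resulting distribution. As written, however, the central bound is not established.
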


This yields a refinement for graphs of small cycle rank. Write $\beta(G):=\dim_{\F_2}\Z(G)=|E(G)|-|V(G)|+c(G)$ for the cycle rank, or first Betti number, where $c(G)$ is the number of connected components.

\begin{theorem}\label{thm:main-beta}
There is an absolute constant $C_\beta$ such that every finite multigraph $G$ with $\beta(G)\ge2$ satisfies
$\bn(G)\le C_\beta\log \beta(G)$.
\end{theorem}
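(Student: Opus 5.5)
We derive Theorem~\ref{thm:main-beta} from Theorem~\ref{thm:main-n} by reducing $G$ to a graph whose number of vertices is $O(\beta(G))$, applying the vertex bound there, and pulling the basis back with no loss in congestion.

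\textbf{Reduction to the 2-core.} Cycles lie inside blocks, so the cycle space of $G$ is the direct sum of the cycle spaces of its blocks. Hence $\bn(G)=\max_B \bn(B)$ over blocks $B$, and $\beta(B)\le\beta(G)$ for each. Acyclic blocks (bridges) can be ignored, so we may assume $G$ is $2$-connected or is a single vertex with loops. Loops each form their own basis element with charge $1$. After these reductions, $G$ has minimum degree at least $2$.

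\textbf{Suppressing degree-2 vertices.} Replace every maximal path whose internal vertices have degree $2$ by a single edge; this may create parallel edges or loops, which Theorem~\ref{thm:main-n} allows since it covers multigraphs. Call the result $H$. The map sending an edge of $H$ to the corresponding path of $G$ induces an isomorphism $\Z(H)\to\Z(G)$. Under this map the charge of each edge of $G$ equals the charge of the edge of $H$ containing it, so $\bn(G)=\bn(H)$ and $\beta(H)=\beta(G)$. If $H$ is not a cycle, then every vertex of $H$ has degree at least $3$, so $2|E(H)|\ge 3|V(H)|$. Combined with $|E(H)|=|V(H)|-1+\beta$, this gives $|V(H)|\le 2\beta-2$. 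If $H$ is a cycle, then $\beta=1$, which is excluded by the hypothesis, or $\bn\le1$ anyway.

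\textbf{Conclusion.} If $|V(H)|\ge2$, Theorem~\ref{thm:main-n} gives
\[
\bn(G)=\bn(H)\le C\log(2\beta-2)\le C\log(2\beta)\le 2C\log\beta
\]
for $\beta\ge2$, since $\log 2\le\log\beta$. If $|V(H)|=1$, then $H$ is a bouquet of loops and $\bn(H)=1$. Since $\log\beta\ge\log 2$, the bounded cases are absorbed by choosing $C_\beta:=\max(2C,1/\log2)$.

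The step needing the most care is the suppression argument: one must verify that charges are preserved exactly under the path-contraction isomorphism, and that the reduction stays within multigraphs while handling loops and the degenerate one-vertex case. The degree counting that gives $|V(H)|\le 2\beta-2$ is routine.
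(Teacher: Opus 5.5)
Your proof is correct and is essentially the paper's argument: discard bridges, suppress degree-two vertices without changing any charge, use $3|V(H)|\le 2|E(H)|$ to get $|V(H)|\le 2\beta-2$, apply Theorem~\ref{thm:main-n}, and treat the one-vertex bouquet case separately. The only difference is that you split into blocks where the paper splits into connected components, and this changes nothing of substance.
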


A reduction of Lehner and Miraftab, based on a theorem of Richter and Shank, turns \cref{thm:main-beta} into the optimal genus bound.

\begin{theorem}\label{thm:main-genus}
There is an absolute constant $C_g$ such that every graph $G$ of Euler genus $g\ge2$ satisfies
$\bn(G)\le C_g\log g$.
\end{theorem}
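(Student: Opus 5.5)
We derive \cref{thm:main-genus} from \cref{thm:main-beta} through the reduction of Lehner and Miraftab, which rests on the Richter--Shank theorem. The idea is that a graph of Euler genus $g$ is planar apart from an $O(g)$-dimensional piece of its cycle space. Concretely, let $G$ be embedded cellularly in a surface $\Sigma$ of Euler genus $g$; if some component is not cellular, we first reduce to the blocks and handle each separately, since the basis number of a graph is the maximum over its blocks. We may assume $G$ is $2$-connected. The facial walks then span a subspace $\mathcal F\subseteq\Z(G)$ of codimension exactly $g$, and every edge lies in at most two faces. Richter and Shank show that one can choose a family of facial cycles with congestion at most $2$ spanning $\mathcal F$: the face boundaries span $\mathcal F$ with a single linear relation, and in the non-cellular or degenerate cases the facial walks decompose into cycles without raising the charge beyond $2$.

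It remains to complete these facial cycles to a basis of $\Z(G)$ using only $g$ extra cycles, each of whose edges has congestion $O(\log g)$. To do this, fix a spanning tree $T$ of $G$ and a spanning tree $T^*$ of the dual graph $G^*$ that uses only edges outside $T$. This is the tree--cotree decomposition, which leaves exactly $g$ leftover edges $e_1,\dots,e_g$. Contract $T$ and delete the edges dual to $T^*$. We then build an auxiliary graph $H$ as follows: take the union of the fundamental cycles $C_T(e_i)$, and suppress vertices of degree $2$ to obtain a multigraph $H'$ with $\beta(H')=g$.

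We apply \cref{thm:main-beta} to $H'$ to get a basis $\Bas'$ of $\Z(H')$ with congestion at most $C_\beta\log g$. Each cycle of $\Bas'$ lifts to a cycle of $G$ by expanding suppressed paths; an expanded path in $G$ inherits exactly the charge of the edge of $H'$ it replaces. The lifted cycles span a complement of $\mathcal F$, because their images in $\Z(G)/\mathcal F\cong H_1(\Sigma;\F_2)$ match those of the fundamental cycles $C_T(e_i)$, and these form a homology basis. The union of the facial family and the lifted family is therefore a cycle basis of $G$, with congestion at most $2+C_\beta\log g$. This is at most $C_g\log g$ for $g\ge2$.

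The main obstacle is the claim that the lifted cycles are edge-charged only through $H'$. The fundamental cycles $C_T(e_i)$ may overlap heavily on tree paths, so different edges of $H'$ correspond to disjoint paths of $G$ only after we pass to the subdivided union. This union $\bigcup_i C_T(e_i)$ is indeed a subgraph of $G$ with cycle rank $g$, because it is $T'\cup\{e_1,\dots,e_g\}$ for a subtree $T'\subseteq T$. Consequently its edges partition into the suppressed paths, and the overlap issue disappears. We would verify this carefully, together with the homology-complement claim, which is where the Lehner--Miraftab reduction is cited rather than reproved.
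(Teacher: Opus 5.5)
Your proposal is correct and is essentially the paper's argument. The paper cites \cref{lem:LM} to get a subgraph $H$ with $\beta(H)=g$ and $\bn(G)\le\bn(H)+2$, then applies \cref{thm:main-beta}; your tree--cotree construction $H=\bigcup_i C_T(e_i)$, with $\Z(G)=\mathcal F\oplus\Z(H)$, is an explicit proof of that lemma. There are three minor slips: the union of the tree paths is in general a forest rather than a subtree (this still gives $\beta(H)=g$), suppressing degree-two vertices is unnecessary because \cref{thm:main-beta} applies to multigraphs directly, and blocks or components of Euler genus at most one must be handled by the constant bound $3$ rather than by $C_\beta\log g$.
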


\subsection*{The new ingredient}
Rizzi~\cite{Rizzi2009} proved that every graph with nonnegative edge weights of total weight $W$ has a cycle basis of total weight $O(W\log n)$; see also~\cite[Theorem~4.4]{KavithaEtAl2009}. For every cycle basis $\Bas$, we have
\[
\sum_{C\in\Bas}w(C)=\sum_{e\in E}w(e)\ch_{\Bas}(e).
\]
Thus Rizzi's theorem gives a basis of logarithmic weighted average charge for every probability weighting of the edges. Linear programming duality converts these separate statements into a distribution on cycle bases with logarithmic expected charge at every edge.

The marginals of this distribution form a point in the base polytope of the linear matroid whose bases are the cycle bases of $G$. Swap rounding of Chekuri, Vondr\'{a}k and Zenklusen~\cite{ChekuriVondrakZenklusen2010} preserves these marginals and supplies Chernoff concentration, yielding one basis with logarithmic charge at every edge. A shorter route through Freedman and Hastings is explained in \cref{rem:fh-shortcut}.

After the first version appeared on arXiv~\cite{Knauer2026v1}, Long, Pettie and Saranurak~\cite[Theorem~3.2 and Section~4]{LongPettieSaranurak2026} made our Rizzi-based proof of \cref{thm:main-n} algorithmic using multiplicative weights and derandomized swap rounding. Their deterministic polynomial-time construction of a cycle basis of congestion $O(\log n)$ yields edge-fault-tolerant connectivity labeling schemes with $O(\log^2 n)$-bit labels~\cite[Theorem~3.1]{LongPettieSaranurak2026}.

\subsection*{Regular matroids}
Finally, the cycle-rank bound extends from graphic to regular matroids. If $M$ is a finite binary matroid, write $\Z(M)\subseteq\F_2^{E(M)}$ for its cycle space, whose elements are the disjoint unions of circuits. Define $\bn(M)$ analogously as the minimum element-congestion of a basis of $\Z(M)$ consisting of circuits; circuit bases of binary matroids are classical objects, see Longyear~\cite{Longyear1980}, and for regular matroids Wargo~\cite{Wargo1996}. Welsh's matroidal form of Mac Lane's theorem~\cite{Welsh1969} states that a binary matroid $M$ is cographic if and only if $\bn(M)\le2$. We first obtain a logarithmic bound in the size of the ground set.

\begin{theorem}\label{thm:main-regular-set}
There is an absolute constant $C_E$ such that every regular matroid $M$ on $m\ge2$ elements satisfies
$\bn(M)\le C_E\log m$.
\end{theorem}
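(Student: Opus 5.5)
The plan is to reuse the scheme behind \cref{thm:main-n}: a weighted-average bound, then LP duality, then swap rounding. The only graph-specific input there is Rizzi's theorem, so the task is to find a regular-matroid replacement for it.

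\textbf{Averaging bound.} I would aim to prove: for every regular matroid $M$ on $m$ elements and every nonnegative weighting $w$ of $E(M)$ with total weight $W$, there is a circuit basis of $\Z(M)$ of total weight $O(W\log m)$.

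To get this I would use Seymour's decomposition~\cite{Seymour1980}. Every regular matroid is built by $1$-, $2$- and $3$-sums from graphic matroids, cographic matroids and copies of $R_{10}$. The cases go as follows.
\begin{itemize}[leftmargin=*]
\item \emph{$R_{10}$:} it has bounded size, so any circuit basis has congestion $O(1)$.
\item \emph{Graphic pieces:} Rizzi's bound applies, with $n\le m+1$, so the $\log n$ factor becomes $O(\log m)$.
\item \emph{Cographic pieces:} Welsh's theorem gives congestion at most $2$ directly.
\item \emph{Sums:} a circuit basis of a $k$-sum ($k\le3$) is assembled from bases of the parts. Each marker element is replaced by a short circuit of the other part, and one must check that congestion adds only additively along the decomposition.
\end{itemize}

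The danger in the sum step is that a decomposition tree of depth $\Theta(m)$ could accumulate charge. To prevent this I would balance the tree. Choose a centroid-type separation so that every element is involved in only $O(\log m)$ sum operations. The rerouting through a $3$-sum uses marker circuits of size at most $3$ in the other part, and these can be chosen afresh at each level. Each level then costs an additive $O(1)$ per element, with the $O(\log m)$ contributions of the graphic leaves charged separately. Working directly with congestion rather than weight may even give the bound outright. If that succeeds, the duality and rounding steps below become unnecessary; otherwise I use the weighted version as input to them.

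\textbf{Duality and rounding.} The cycle-space circuit bases are exactly the bases of a linear matroid. Its ground set is the set of circuits, represented by their incidence vectors in $\F_2^{E(M)}$. With this in place, the argument for \cref{thm:main-n} transfers verbatim.
\begin{itemize}[leftmargin=*]
\item The minimax theorem turns the averaging bound, valid for all probability weightings, into a distribution on circuit bases with expected charge $O(\log m)$ at every element.
\item Swap rounding of Chekuri--Vondr\'ak--Zenklusen preserves these marginals and gives Chernoff concentration.
\item A union bound over the $m$ elements then yields a single circuit basis with charge $O(\log m)$ everywhere.
\end{itemize}

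\textbf{Main obstacle.} I expect the hard step to be controlling charge through the $2$- and $3$-sums, in particular making the decomposition depth logarithmic. Seymour's tree need not be balanced. If balancing it fails, my fallback is to avoid the tree and instead apply a Rizzi-type argument directly to regular matroids. Rizzi's proof takes short cycles through a low-stretch or BFS-layer structure; its analogue would use the totally unimodular representation and a flow/tension metric. That route seems substantially harder.
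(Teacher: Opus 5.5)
\textbf{Gap: the per-level cost of gluing is not $O(1)$.} The decomposition part of your plan matches the paper: Seymour's theorem, a centroid to balance the recursion, gluing bases of the parts by rerouting through markers, and vertex cuts for cographic pieces. But the claim that makes it work, namely that each level of the recursion costs only an additive $O(1)$ per element, is false.

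When a centre piece $B$ is glued to a part $M_i$ along $S_i$, every member $A$ of the chosen basis of $\Z(B)$ that meets $S_i$ must be completed by a cycle $D_i(A)$ of $M_i$ with $D_i(A)\cap S_i=A\cap S_i$. The rest of $D_i(A)$ lies in $E(M_i)\setminus S_i$ and may be long. That $S_i$ is a triangle does not help, since $S_i$ is deleted in the sum. So an element of $M_i$ gains as many occurrences as there are basis members of $B$ through $S_i$, up to $3\max_{s\in S_i}\ch_{\Bas}(s)$.

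This cannot be spread out in general. If $M_i$ is a polygon $2$-summed along $s$, every such $A$ is rerouted through the whole path. For a graphic centre with many markers, the marker charges in any basis are of order $\log|E(B)|$. With recursion depth $\Theta(\log m)$, your accounting therefore only certifies $O(\log^2 m)$, which is the previously known order.

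The weighted fallback has the same defect. Once markers are weighted by their parts, the centre costs $O(W\log m)$ at each of $\Theta(\log m)$ levels. LP duality plus swap rounding only converts an averaging bound into a congestion bound; it does not recover the lost logarithm.

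\textbf{Missing idea: make the per-level costs telescope.} The paper does this with a priority form of the cycle-rank bound (\cref{lem:priority}). For a graphic matroid with elements ordered $x_1,\dots,x_m$, there is a cycle basis with $\ch_{\Bas}(x_j)=O(1+\log j)$. It is proved by peeling off blocks $P_s$ of sizes up to $2^{2^s}$ via $\Z(G_{s-1})=\Z(G_s)\oplus\Z(F_s\cup P_s)$, and applying \cref{thm:main-beta} to each block. The $n$-vertex bound would not suffice here, because $F_s\cup P_s$ may have many vertices even though its cycle rank is at most $|P_s|$.

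At the weighted centroid, list the markers first, in decreasing order of $m_i=|E(M_i)\setminus S_i|$. Then the markers of the $i$th part have charge $O(1+\log i)$, and $i\le m/m_i$. By \cref{lem:star} and induction, elements of $M_i$ get charge at most $C_E\log m_i+O(\log(m/m_i))\le C_E\log m$. With this, no duality or rounding is needed at the matroid level.
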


This can be strengthened from the number of elements to the dimension of the cycle space.

\begin{theorem}\label{thm:main-regular}
There is an absolute constant $C_{\rm dim}$ such that every regular matroid $M$ with $d:=\dim(\Z(M))\ge2$ satisfies
$\bn(M)\le C_{\rm dim}\log d$.
\end{theorem}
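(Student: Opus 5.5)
Proving \cref{thm:main-regular} amounts to reducing the number of elements from $m$ to $O(d)$ and then invoking \cref{thm:main-regular-set}. First, coloops lie in no circuit, so deleting them changes neither $\Z(M)$ nor $\bn(M)$; likewise any circuit basis of $M\oplus N$ is the union of circuit bases of the summands, so $\bn$ is the maximum over connected components and $d$ is additive. We may therefore assume $M$ is connected, coloopless, and of corank $d$.

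Next comes series reduction. If $M$ is connected with corank $d\ge2$, contract each maximal series class (cocircuit of size $2$ chains) to a single element, obtaining a regular matroid $M'$ with the same corank $d$. Circuits of $M$ correspond bijectively to circuits of $M'$: a circuit of $M$ contains a series class entirely or avoids it. Hence a circuit basis of $M'$ lifts to a circuit basis of $M$ with the same congestion, because the charge of each element in a class equals the charge of the representative. So $\bn(M)=\bn(M')$.

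The key counting step is to show that a connected regular (indeed binary) matroid with no series pairs (cosimple) and corank $d\ge2$ has $|E(M')|=O(d)$. Dually, $M'^*$ is simple with rank $d$, and we need that a simple regular matroid of rank $d$ has $O(d)$ elements. This is false in general, since $M(K_n)$ has rank $n-1$ and $\binom n2$ elements. So pure series reduction is insufficient, and the plan must change: rather than bounding $m$ by $O(d)$, use that a simple regular matroid of rank $r$ has at most $\binom{r+1}{2}$ elements (Heller~\cite{Heller1957}). Applied to $M'^*$ this gives $m'\le\binom{d+1}{2}$, provided that $M'$ is also free of loops and parallel pairs in the dual sense, which the series reduction guarantees: cosimple $M'$ means $M'^*$ is simple. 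Then \cref{thm:main-regular-set} yields $\bn(M')\le C_E\log\binom{d+1}2\le 2C_E\log d+O(1)$, and hence $\bn(M)\le C_{\rm dim}\log d$ for $d\ge2$.

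The main obstacle is making the series reduction rigorous for matroids. Cosimplification of a connected binary matroid must preserve regularity (contraction of minors does), and it must preserve corank (contracting an element of a series pair that is not a coloop drops the rank by one while also reducing $|E|$ by one). The delicate point is that circuit bases correspond exactly, including that a series class is traversed wholly by each circuit through it. Once that is checked, the rest is Heller's bound plus \cref{thm:main-regular-set}. I would double-check the edge cases with $m'$ small, where $d\ge2$ still ensures $m'\ge2$ so that \cref{thm:main-regular-set} applies.
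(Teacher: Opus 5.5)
Your final route is correct and is essentially the paper's proof. You delete coloops and contract series pairs, which preserves the cycle space under coordinate projection and hence preserves $d$ and the optimal congestion, to reach a cosimple regular matroid of corank $d$. You then bound its size by $\binom{d+1}{2}\le d^2$ via Heller's theorem applied to its simple dual of rank $d$, and apply \cref{thm:main-regular-set}.

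The abandoned $O(d)$ plan and the reduction to connected components are unnecessary; the latter would also need the trivial bound $\bn\le1\le\log d$ for components of corank at most one. In your parenthetical, the condition for contraction to drop the rank is that the element is not a loop, not that it is not a coloop; this holds automatically for members of series pairs.
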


Simplification also yields a logarithmic bound in terms of rank.

\begin{restatable}{theorem}{regularranktheorem}\label{thm:main-regular-rank}
There is an absolute constant $C_{\rm rk}$ such that every regular matroid $M$ of rank $r:=r(M)\ge2$ satisfies $\bn(M)\le C_{\rm rk}\log r$.
\end{restatable}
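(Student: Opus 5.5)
The plan is to deduce the statement from \cref{thm:main-regular-set} by simplification. The aim is to reduce to a simple regular matroid, whose size is polynomial in its rank, and then take logarithms. Throughout I assume \cref{thm:main-regular-set}, which says $\bn(N)\le C_E\log|E(N)|$ for every regular matroid $N$ with $|E(N)|\ge2$.

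\emph{Step 1 (loops and parallel classes).} Let $M$ be regular of rank $r\ge2$. Each loop $\ell$ is a circuit $\{\ell\}$ on its own, and I put it into the basis as a singleton; this contributes charge $1$ only at $\ell$. For each parallel class $P=\{p_0,\dots,p_k\}$, I fix the representative $p_0$ and add the $2$-circuits $\{p_0,p_i\}$ for $i=1,\dots,k$. Let $N:=\operatorname{si}(M)$ be the simplification, which keeps one representative per parallel class. Then $\dim\Z(M)=\dim\Z(N)+(\#\text{loops})+\sum_P(|P|-1)$, because deleting a loop or a redundant parallel element lowers the nullity by one and leaves the rank unchanged. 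Take a circuit basis $\Bas_N$ of $\Z(N)$ and view its members as circuits of $M$, using the representatives. Then $\Bas_N$ together with the added loops and $2$-circuits spans $\Z(M)$. Indeed, any circuit of $M$ can be reduced modulo the $2$-circuits by swapping each $p_i$ for $p_0$, giving an element of $\Z(N)$. The family also has the right cardinality, so it is a basis. For the charges: a non-representative element has charge at most $1$, and a representative $p_0$ has charge $\ch_{\Bas_N}(p_0)+k$. That count is too large when $k$ is big, so the parallel classes need different handling.

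\emph{Fixing Step 1.} Instead of the star of $2$-circuits at $p_0$, I use a path: the circuits $\{p_{i-1},p_i\}$ for $i=1,\dots,k$. These still reduce every $p_i$ to $p_0$, since the sum $\{p_0,p_1\}+\dots+\{p_{i-1},p_i\}=\{p_0,p_i\}$ lies in their span, so the spanning argument goes through unchanged. Each element now lies in at most two of the new circuits. Hence $\bn(M)\le \bn(N)+2$. If $N$ has at most one element, then $\Z(N)=0$ and the bound $\bn(M)\le 2$ holds trivially.

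\emph{Step 2 (size of simple regular matroids).} Here I need that a simple regular matroid of rank $r$ has at most $\binom{r+1}{2}$ elements. This is Heller's theorem~\cite{Heller1957}, which is cited in the paper, so I will invoke it. Note that $r(N)=r(M)=r\ge2$. Then $|E(N)|\le r(r+1)/2\le r^2$, and so
\[
\bn(M)\le C_E\log(r^2)+2=2C_E\log r+2\le C_{\rm rk}\log r
\]
for a suitable constant $C_{\rm rk}$, using $\log r\ge\log2>0$. If $|E(N)|<2$ I use the trivial bound instead.

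\emph{Main obstacle.} The delicate point is keeping parallel classes from inflating the charge at the representative, which the path of $2$-circuits handles. Beyond that, the argument rests entirely on Heller's bound. A secondary check is that every circuit of $N$ is a circuit of $M$; this holds because $N$ is a restriction of $M$.
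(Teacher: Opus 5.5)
Your proof is correct and follows the same route as the paper: simplify, bound $|E(N)|\le\binom{r+1}{2}\le r^2$ by Heller's theorem, and apply \cref{thm:main-regular-set} to the simplification $N$. The only difference is how you restore parallel classes. Your path of $2$-circuits costs an additive constant, giving $\bn(M)\le\bn(N)+2$ (in fact $\max\{\bn(N)+1,2\}$), whereas the paper moves half of the basis circuits through $e$ onto the new parallel element $f$ and obtains the sharper $\bn(M)\le\max\{\bn(N),2\}$; for an $O(\log r)$ bound with $r\ge2$ either suffices.
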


The proofs are given in \cref{sec:regular}. The ground-set bound combines \cref{thm:main-beta} with the unordered form of Seymour's decomposition theorem. Its main ingredient is a priority version of \cref{thm:main-beta}: after ordering the elements of a graphic matroid, the $j$th element can be given charge $O(1+\log j)$. A final reduction to a cosimple matroid and Heller's extremal theorem then converts \cref{thm:main-regular-set} into the dimension bound in \cref{thm:main-regular}. Simplification gives the corresponding rank bound in \cref{thm:main-regular-rank}. We note in \cref{sec:binary} that an extension of our results to general binary matroids is not possible.

\medskip

The matching lower-bound orders are classical. The girth bound of Banks and Schmeichel~\cite{BanksSchmeichel1982}, combined with any standard family of cubic graphs of logarithmic girth, for instance the bipartite sextet graphs of Weiss~\cite{Weiss1984}, gives $\Omega(\log n)$ and $\Omega(\log\beta)$. Lehner and Miraftab~\cite{LehnerMiraftab2026} explicitly obtain the logarithmic genus lower bound. Since graphic matroids are regular, the same family also shows that \cref{thm:main-regular-set,thm:main-regular,thm:main-regular-rank} are sharp in order.

\section{Cycle bases and two external ingredients}\label{sec:ingredients}

We allow loops and parallel edges; a loop contributes two to the degree of its incident vertex. We write $\log$ for the base-$2$ logarithm and $\ln$ for the natural logarithm.

Some authors allow arbitrary Eulerian subgraphs as basis elements, while others require cycles. This distinction does not affect the parameter: if a basis element $A$ is not a cycle, decompose $A$ into edge-disjoint cycles and replace it by one of these cycles that is not in the span of the remaining basis elements. Iterating gives a cycle basis without increasing any edge charge.

For an edge weighting $w$, put $w(C)=\sum_{e\in C}w(e)$.

\begin{theorem}[Rizzi]\label{thm:rizzi}
There is an absolute constant $a>0$ such that every simple graph $G$ on $n\ge2$ vertices, with nonnegative edge weights of total weight $W$, has a cycle basis $\Bas$ satisfying $\sum_{C\in\Bas}w(C)\le aW\log n$.
\end{theorem}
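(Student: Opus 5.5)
The statement is Rizzi's theorem, an external ingredient, so I sketch how I would reprove it rather than derive it from earlier results here. The plan is to build a low-stretch spanning structure and use fundamental cycles, paying $O(\log n)$ per unit of weight through a recursive decomposition. First, reduce to the case where $G$ is $2$-edge-connected and has minimum degree at least $3$. Degree-$1$ vertices carry no cycles. Degree-$2$ vertices can be suppressed by merging their two edges into one edge of summed weight; a cycle basis of the contracted graph lifts to one of the original graph with the same total weight. This keeps $W$ unchanged and does not increase $n$. After this reduction $|E|\ge 3n/2$, and the graph may have parallel edges, but that is harmless in the argument below.

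The core step is a recursive ``short cycle with cheap removal'' argument. I would aim for the following lemma: in a multigraph with minimum degree at least $3$ on $n$ vertices, there is a cycle $C$ and an edge $e\in C$ with
\[
w(C)\le a'\,\log n\cdot \frac{W}{\beta(G)} \quad\text{or more flexibly}\quad w(C)\le a'\log n\cdot w(e)+(\text{amortized share}).
\]
Put $e$'s cycle into the basis, delete $e$, and recurse. Each deletion lowers $\beta$ by one, and the chosen cycles are independent because each contains an edge absent from all later graphs. So the output is a basis. The natural amortization: every edge $f$ receives a charge when it lies on a chosen cycle, and we want the total charge to be $O(W\log n)$. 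Since bounding charge per edge is too strong, I would instead use a ball-growing argument in the style of Elkin--Emek--Spielman--Teng or Bartal. Grow metric balls with respect to $w$ (treating $w$ as lengths) around a vertex. Because minimum degree is at least $3$, a BFS ball in the combinatorial sense reaches a cycle within radius $O(\log n)$. The weighted version uses the standard region-growing inequality: the ball radius $r$ can be chosen so that the boundary weight is at most $O(\log n)/r$ times the interior volume. This lets one cut the graph into clusters whose intra-cluster cycles are short relative to the cluster's weight, charging cut edges $O(\log n)$ times their weight.

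Concretely, I would proceed in three steps. First, take a low-stretch spanning tree $T$ with respect to lengths $w$; or, more simply, run region growing to obtain a decomposition. Second, inside each cluster use fundamental cycles of a shortest-path tree, whose length is at most twice the cluster radius plus the non-tree edge's weight. Third, contract the clusters and recurse on the quotient multigraph, which again has its degree-$\le2$ vertices suppressed. The number of vertices drops geometrically, giving $O(\log n)$ levels. I would then check that the charges telescope to $O(W\log n)$ rather than $O(W\log^2 n)$.

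The main obstacle is exactly this: naive recursion or a generic low-stretch tree (stretch $O(\log n\log\log n)$) loses an extra factor. Fundamental cycles of a tree can also heavily reuse tree edges. To get the clean $O(W\log n)$, I would first try a potential argument charging each edge weight $w(f)$ only $O(1)$ times per halving of some ``volume'' parameter attached to the cluster containing $f$. This uses the minimum-degree-$3$ expansion (balls of radius $t$ contain at least $2^{t}$ vertices) so that each edge participates in cycles at only $O(\log n)$ scales. If this stalls, the fallback is Rizzi's original approach, which is to choose cycles greedily in Horton fashion with a careful counting of how often each edge can appear; it can be cited directly, since the paper treats the theorem as known.
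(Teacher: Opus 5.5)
The paper does not prove this statement. It imports it from Rizzi (see also Kavitha et al., Theorem~4.4), so your closing fallback is exactly what the paper does.

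As a reproof, however, your sketch has a genuine gap, and the steps you leave open are precisely where the $\log n$ versus $\log^2 n$ issue lives.
\begin{itemize}
\item The one concrete lemma you state, $w(C)\le a'\log n\cdot W/\beta(G)$, fed into your delete-and-recurse scheme, gives $\sum_{k=1}^{\beta}a'\log n\cdot W/k=O(W\log n\log\beta)$, that is, $O(W\log^2 n)$.
\item The region-growing and contraction scheme is never carried out. $O(\log n)$ levels each losing $O(\log n)$ again gives $\log^2 n$, unless the telescoping you postpone is actually proved.
\item Known low-stretch trees only give $O(W\log n\log\log n)$, as you note.
\item The expansion fact ``balls of radius $t$ contain at least $2^t$ vertices'' concerns unweighted BFS balls below the girth scale. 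It gives no control over balls in the $w$-metric, which is where you want to use it.
\end{itemize}

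The missing idea is to charge each selected cycle to the single edge you delete, and to choose the cycle by its number of edges rather than by its weight. You already have every ingredient. After your reduction (remove vertices of degree at most one, suppress degree-two vertices with summed weights), the multigraph either has a loop or a parallel pair, or is simple of minimum degree at least $3$ on $n'\le n$ vertices. In all cases the Moore bound (your own BFS remark) gives a cycle $C$ with at most $2\log_2 n+1$ edges. Delete the heaviest edge $e$ of $C$; then $w(C)\le(2\log_2 n+1)\,w(e)$.

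Each merged edge stands for a path of original edges. The paths deleted at different steps are pairwise disjoint, since deleted edges never reappear. Re-reducing after each deletion and iterating, you get
\[
\sum_k w(C_k)\le(2\log_2 n+1)\sum_k w(e_k)\le(2\log_2 n+1)\,W\le 3W\log_2 n .
\]
Each deletion lowers $\beta$ by exactly one, because $e$ lies on $C$. Moreover, $C_k$ contains the edges of $e_k$, which lie in no later $C_j$. Hence the $\beta(G)$ lifted cycles are independent and form a basis; in fact a weakly fundamental one, matching the stronger form the paper mentions.

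Your remark that ``bounding charge per edge is too strong'' steered you away from exactly this. What is needed is not a congestion bound but a bound on each cycle's weight in terms of one edge that is then discarded forever. This elementary argument gives it with no region growing or low-stretch trees.
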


Rizzi~\cite{Rizzi2009} proves the stronger statement that the basis may be chosen weakly fundamental; see also~\cite[Theorem~4.4]{KavithaEtAl2009}. We use only the weight bound.

For a finite matroid $M$ on $S$, let $P_M:=\operatorname{conv}\{\mathbf 1_B:B\text{ is a base of }M\}$, where $\mathbf 1_B$ is the incidence vector of $B$. A point $x\in P_M$ is a convex combination of incidence vectors of bases. Swap rounding produces a random base $B$ with $\Pr(s\in B)=x_s$ for every $s\in S$ and gives Chernoff upper tails for the number of selected elements in any prescribed subset of $S$~\cite[Theorem~1.1 and Corollary~1.2]{ChekuriVondrakZenklusen2010}.

\begin{lemma}[Simultaneous rounding]\label{lem:rounding}
There is an absolute constant $c>0$ with the following property. Let $M$ be a finite matroid on $S$, let $x\in P_M$, and let $A_1,\ldots,A_q\subseteq S$, where $q\ge2$. If $x(A_i):=\sum_{s\in A_i}x_s\le L$ for every $i$, then $M$ has a base $B$ such that $|B\cap A_i|\le c(L+\ln q)$ for all $i$.
\end{lemma}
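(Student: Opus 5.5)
The plan is to apply swap rounding as recorded just before the statement, then take a union bound over the $q$ sets. Fix $x\in P_M$. By \cite[Theorem~1.1 and Corollary~1.2]{ChekuriVondrakZenklusen2010}, swap rounding produces a random base $B$ with $\Pr(s\in B)=x_s$ for all $s\in S$. Moreover, for every fixed subset $A\subseteq S$ the count $X_A:=|B\cap A|$ satisfies the standard multiplicative Chernoff upper tail,
\[
\Pr\bigl(X_A\ge(1+\delta)\mu\bigr)\le\Bigl(\frac{e^\delta}{(1+\delta)^{1+\delta}}\Bigr)^{\mu},
\]
for every $\mu\ge\mathbb E[X_A]=x(A)$ and every $\delta>0$.

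The key step is to run the tail bound with the uniform upper bound $\mu:=L+\ln q$ in place of $x(A_i)$. This is legitimate because the Chernoff upper tail remains valid when the mean is replaced by any upper bound; I will check this is covered by Corollary~1.2 of \cite{ChekuriVondrakZenklusen2010}. If it is not, I will pad each $A_i$ by dummy independent variables, or note that the moment-generating-function bound $\mathbb E[e^{tX}]\le e^{(e^t-1)\mathbb E X}$ underlying their proof is monotone in the mean. Take $\delta:=e^2-1$, so $1+\delta=e^2$. The bound becomes
\[
\bigl(e^{e^2-1}/e^{2e^2}\bigr)^{\mu}=e^{-(e^2+1)\mu}\le e^{-\mu}\le e^{-\ln q}=1/q^{\,1}\cdot e^{-L}.
\]
To get strict room in the union bound, I will use instead the crude estimate $\Pr(X\ge R)\le 2^{-R}$ for $R\ge 2e\mu$. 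This gives $\Pr(X_{A_i}\ge 2e\mu)\le 2^{-2e(L+\ln q)}<1/q$ for $q\ge2$, since $2e\ln 2\cdot \ln q>\ln q$.

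A union bound over $i=1,\dots,q$ then gives $\Pr(\exists i: |B\cap A_i|\ge 2e(L+\ln q))<1$. Hence some base $B$ in the support of the rounding satisfies $|B\cap A_i|< c(L+\ln q)$ for all $i$, with $c=2e$ (or any larger absolute constant). The only point needing care is that $L$ may be tiny or zero. This is harmless because $\ln q\ge\ln 2>0$ keeps $\mu$ bounded away from zero, which is exactly why the statement assumes $q\ge2$.

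The main obstacle is purely bookkeeping. One must confirm that the concentration in \cite{ChekuriVondrakZenklusen2010} is stated for arbitrary subsets (it is: any linear function with coefficients in $[0,1]$, via negative correlation of the rounding) and with an upper bound on the mean allowed. Everything else is the union bound.
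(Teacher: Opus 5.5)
Your argument is correct and is essentially the paper's proof: swap rounding of Chekuri, Vondr\'{a}k and Zenklusen, a Chernoff upper tail at a threshold that is a constant multiple of $L+\ln q$, and a union bound over the $q$ sets. The paper sidesteps your concern about replacing the mean by an upper bound in a different way. It uses the tail $\Pr[X_i\ge t]\le(e\mu_i/t)^t$ with the true mean $\mu_i=x(A_i)\le L$, which is increasing in $\mu_i$, treats $\mu_i=0$ separately, and takes $t=6(L+\ln q)$ to get $(e/6)^t\le q^{-2}$ for each set.
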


\begin{proof}
Apply swap rounding and put $X_i:=|B\cap A_i|$ and $\mu_i:=\mathbb E[X_i]=x(A_i)\le L$. If $\mu_i=0$, then $X_i=0$ almost surely; otherwise the Chernoff bound gives $\Pr[X_i\ge t]\le(e\mu_i/t)^t$ for $t\ge\mu_i$~\cite[Corollary~1.2]{ChekuriVondrakZenklusen2010}. Taking $t=6(L+\ln q)$ yields $\Pr[X_i\ge t]\le(e/6)^t\le q^{-2}$. The union bound proves the claim with $c=6$.
\end{proof}

\section{From weighted average charge to low congestion}\label{sec:marginals}

We first treat connected simple graphs. For such a graph $G$, let $\mathfrak B(G)$ be the finite set of all simple-cycle bases of $G$.

\begin{lemma}[Linear-programming duality]\label{lem:minimax}
Let $G$ be a connected simple graph on $n\ge2$ vertices. There is a probability distribution $p=(p_{\Bas})_{\Bas\in\mathfrak B(G)}$ such that $\sum_{\Bas\in\mathfrak B(G)}p_{\Bas}\ch_{\Bas}(e)\le a\log n$ for every $e\in E(G)$, where $a$ is the constant from \cref{thm:rizzi}.
\end{lemma}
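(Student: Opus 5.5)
We will obtain the distribution $p$ from the minimax theorem, or equivalently from linear programming duality, applied to a finite two-player zero-sum game. One player chooses a basis $\Bas\in\mathfrak B(G)$, the other chooses an edge $e\in E(G)$, and the payoff to the edge player is $\ch_{\Bas}(e)$. Both strategy sets are finite. Here $\mathfrak B(G)$ is nonempty because $G$ is finite, and $E(G)\neq\varnothing$ because $G$ is connected with $n\ge2$. Von Neumann's minimax theorem then gives
\[
\min_{p}\max_{e}\sum_{\Bas}p_{\Bas}\ch_{\Bas}(e)=\max_{w}\min_{\Bas}\sum_{e}w(e)\ch_{\Bas}(e),
\]
where $p$ ranges over probability distributions on $\mathfrak B(G)$ and $w$ over probability distributions on $E(G)$. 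Since the payoff is linear in $p$, the maximum over edges on the left equals the maximum over mixed strategies of the edge player. It therefore suffices to bound the right-hand side by $a\log n$.

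To bound the right-hand side, fix an arbitrary probability weighting $w$ of $E(G)$. It is nonnegative with total weight $W=1$. By \cref{thm:rizzi}, $G$ has a cycle basis $\Bas_w$ with $\sum_{C\in\Bas_w}w(C)\le a\log n$. The double-counting identity from the introduction gives $\sum_{e}w(e)\ch_{\Bas_w}(e)=\sum_{C\in\Bas_w}w(C)\le a\log n$, so $\min_{\Bas}\sum_e w(e)\ch_{\Bas}(e)\le a\log n$. The basis $\Bas_w$ must lie in $\mathfrak B(G)$, that is, consist of simple cycles. If Rizzi's basis contains non-simple Eulerian elements, we apply the replacement procedure from \cref{sec:ingredients}. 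That procedure exchanges each such element for one of its cycles without increasing any edge charge, so the weighted sum does not increase. Since $w$ was arbitrary, the right-hand side is at most $a\log n$. The minimizing $p$ on the left is then the desired distribution.

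There is no substantial obstacle. The only points that need care are that the strategy sets are finite and nonempty, so that the minimax theorem, or LP strong duality, applies with attained optima, and that Rizzi's bases can be taken in $\mathfrak B(G)$. If one prefers LP duality explicitly, the primal is: minimize $t$ subject to $\sum_{\Bas}p_{\Bas}\ch_{\Bas}(e)\le t$ for all $e$, $\sum_{\Bas} p_{\Bas}=1$, and $p\ge0$. Its dual is: maximize $s$ subject to $\sum_e w(e)\ch_{\Bas}(e)\ge s$ for all $\Bas$, $\sum_e w(e)=1$, and $w\ge0$. The primal is feasible and the dual is bounded, so strong duality and the Rizzi bound give a primal optimum of at most $a\log n$.
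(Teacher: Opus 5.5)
Your proposal is correct and is essentially the paper's proof. Both use the same minimax/LP-duality identity between probability weightings $w$ on $E(G)$ and distributions $p$ on $\mathfrak B(G)$, with Rizzi's theorem and the identity $\sum_{C\in\Bas}w(C)=\sum_e w(e)\ch_{\Bas}(e)$ bounding the max--min side by $a\log n$; your extra checks (finite nonempty strategy sets, and taking Rizzi's basis to consist of simple cycles via the replacement procedure) are details the paper leaves implicit.
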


\begin{proof}
For an edge $e$ and a basis $\Bas$, set $a_{e,\Bas}:=\ch_{\Bas}(e)$. Linear programming duality gives
\begin{equation}\label{eq:minimax}
\max_w\min_{\Bas\in\mathfrak B(G)}\sum_e w_ea_{e,\Bas}
=\min_p\max_e\sum_{\Bas\in\mathfrak B(G)}p_{\Bas}a_{e,\Bas},
\end{equation}
where $w$ ranges over probability distributions on $E(G)$ and $p$ over probability distributions on $\mathfrak B(G)$. For every $w$, \cref{thm:rizzi} gives a basis with $\sum_e w_ea_{e,\Bas}=\sum_{C\in\Bas}w(C)\le a\log n$. Hence the left-hand side of \eqref{eq:minimax} is at most $a\log n$, and the right-hand side yields the required distribution.
\end{proof}

\begin{remark}\label{rem:fh-shortcut}
After the first arXiv version~\cite{Knauer2026v1} appeared, a shorter route to \cref{lem:minimax} was brought to our attention. The construction of Freedman and Hastings~\cite[proof of Lemma~A.0.2]{FreedmanHastings2021} already gives a distribution with expected charge $O(\log n)$ at every edge. Indeed, fix $e\in E(G)$ and put $f=e$. Until $f$ is deleted, replace it by the new edge whenever a degree-two vertex incident with $f$ is suppressed. Each Case~3 cycle containing $f$ deletes $f$ with conditional probability $\Omega(1/\log n)$, while Case~2A contributes at most one occurrence, since it also deletes $f$. This estimate needs no upper degree bound. Their distribution can therefore be used directly in \cref{prop:simple-connected}, bypassing the weighted theorem and linear programming duality. We retain the original proof because it has already led to the algorithmic application described in the introduction.
\end{remark}

We now use the matroid structure of cycle bases. Let $\mathcal S(G)$ be the set of simple cycles of $G$, represented by their incidence vectors in $\Z(G)$. Linear independence over $\F_2$ defines a vector matroid $M_G$ on ground set $\mathcal S(G)$, whose bases are precisely the simple-cycle bases of $G$; see~\cite[Theorem~3.10]{KavithaEtAl2009}.

\begin{proposition}\label{prop:simple-connected}
There is an absolute constant $C_0$ such that every connected simple graph $G$ on $n\ge2$ vertices satisfies
$\bn(G)\le C_0\log n$.
\end{proposition}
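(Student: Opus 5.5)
The plan is to combine \cref{lem:minimax} with \cref{lem:rounding}, applied to the matroid $M_G$ on the ground set $\mathcal S(G)$ of simple cycles.

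First, let $p$ be the distribution on $\mathfrak B(G)$ given by \cref{lem:minimax}. Define $x\in\mathbb R^{\mathcal S(G)}$ by
\[
x_C:=\sum_{\Bas\ni C}p_{\Bas}.
\]
Then $x=\sum_{\Bas}p_{\Bas}\mathbf 1_{\Bas}$, and since the bases of $M_G$ are exactly the simple-cycle bases of $G$, $x$ is a convex combination of base indicators, so $x\in P_{M_G}$.

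Second, for each edge $e\in E(G)$ put $A_e:=\{C\in\mathcal S(G):e\in C\}$. Exchanging the order of summation gives
\[
x(A_e)=\sum_{\Bas}p_{\Bas}\,|\Bas\cap A_e|=\sum_{\Bas}p_{\Bas}\ch_{\Bas}(e)\le a\log n .
\]
So $L=a\log n$ is a valid bound for every $i$ in \cref{lem:rounding}.

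Third, the number of sets is $q=|E(G)|\le\binom n2<n^2$, so $\ln q=O(\log n)$. \cref{lem:rounding} then yields a base $\Bas$ of $M_G$, that is, a simple-cycle basis of $G$, with
\[
\ch_{\Bas}(e)=|\Bas\cap A_e|\le c(a\log n+2\ln n)\le C_0\log n
\]
for a suitable absolute constant $C_0$.

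The only care needed is in degenerate cases. If $G$ is a tree, then $\Z(G)=0$ and the empty family is a basis, so $\bn(G)=0$. If $q<2$, then $G$ has at most one edge, hence is acyclic, and this is again the tree case. So we may assume $G$ has a cycle, in which case $q\ge3$ and \cref{lem:rounding} applies.

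No step here is a real obstacle; the substantive work lies in \cref{thm:rizzi}, \cref{lem:minimax} and swap rounding. The points to check are that the marginal vector of a distribution over bases lies in the base polytope, and that the charge of $e$ in a basis equals the number of selected ground elements lying in $A_e$. Both follow immediately from the definitions.
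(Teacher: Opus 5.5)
Your proposal is correct and follows the paper's proof essentially verbatim. You take the marginals of the distribution from \cref{lem:minimax} as a point of $P_{M_G}$, bound $x(A_e)\le a\log n$, and apply \cref{lem:rounding} with $q=|E(G)|\le\binom n2$. Your treatment of the degenerate cases (trees, and $q\ge3$ once $G$ has a cycle) is slightly more explicit than the paper's.
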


\begin{proof}
If $G$ is a tree there is nothing to prove, so assume that $G$ contains a cycle. Let $\Bas$ be a random cycle basis sampled from the distribution in \cref{lem:minimax}, and for every $C\in\mathcal S(G)$ put $x_C:=\Pr(C\in\Bas)$. Since $\Bas$ is always a base of $M_G$, $x=\mathbb E[\mathbf 1_{\Bas}]\in P_{M_G}$. For each graph edge $e$, let $A_e:=\{C\in\mathcal S(G):e\in C\}$. Then
\[
x(A_e)=\sum_{C\ni e}x_C=\mathbb{E}[\ch_{\Bas}(e)]\le a\log n.
\]
Write $m=|E(G)|$. Apply \cref{lem:rounding} to $M_G$ and the sets $(A_e)_{e\in E(G)}$. We obtain a base $\Bas'$ of $M_G$, hence a simple-cycle basis of $G$, with $\ch_{\Bas'}(e)=O(\log n+\ln m)$ for every $e\in E(G)$. Since $G$ is simple, $m\le\binom n2$, and therefore $\ln m=O(\log n)$.
\end{proof}

The extension to arbitrary multigraphs is standard. Bazargani et al.~\cite[Corollary~3.5]{BazarganiEtAl2026} show that adding a parallel copy of an edge increases the basis number to at most the maximum of the old basis number and $2$.

\begin{lemma}[Loops and parallel classes]\label{lem:parallel}
Let $G$ be a finite multigraph, and let $G_{\rm s}$ be obtained by deleting all loops and retaining one representative from every non-loop parallel class. Then
$\bn(G)\le\max\{\bn(G_{\rm s}),2\}$.
\end{lemma}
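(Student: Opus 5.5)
The plan is to start from a cycle basis of $G_{\rm s}$ and add short cycles for the loops and the extra parallel edges. Fix a cycle basis $\Bas_{\rm s}$ of $G_{\rm s}$ with $\cng(\Bas_{\rm s})=\bn(G_{\rm s})$. For each loop $\ell$, take the one-edge cycle $\{\ell\}$. For each non-loop parallel class with representative $e$ and further copies $e_1,\ldots,e_k$, take the $2$-cycles $\{e,e_1\},\{e_1,e_2\},\ldots,\{e_{k-1},e_k\}$. These are chained consecutively rather than all pinned at $e$, so that no edge gets large charge. Let $\Bas$ be the union of $\Bas_{\rm s}$ with all these new cycles.

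\emph{Basis property.} We have $\beta(G)=\beta(G_{\rm s})+(\#\text{loops})+\sum k$, since each loop or extra parallel edge adds one edge and no vertex. This equals $|\Bas|$, so it suffices to check linear independence. Order the new cycles so that each contains an edge not in $G_{\rm s}$ and not in any earlier new cycle: the loop $\ell$ for $\{\ell\}$, and $e_i$ for $\{e_{i-1},e_i\}$ with $e_0=e$. Processing in reverse order, each such private edge shows that the coefficient of its cycle in any vanishing combination is zero. What remains is a combination of $\Bas_{\rm s}$, which is independent in $\Z(G_{\rm s})\subseteq\Z(G)$.

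\emph{Charges.} A loop lies in exactly one new cycle and in no element of $\Bas_{\rm s}$, so it has charge $1$. A copy $e_i$ with $i\ge1$ lies in at most two new cycles and in no element of $\Bas_{\rm s}$, so it has charge at most $2$. The representative $e$ has its old charge, at most $\bn(G_{\rm s})$, plus one for the cycle $\{e,e_1\}$. That gives $\bn(G_{\rm s})+1$, which is too weak. This is the main obstacle: we must not add charge to $e$.

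\emph{Fixing the representative.} To avoid charging $e$, choose the representative and the $2$-cycles differently. Replace $\{e,e_1\}$ by the cycle $C+e+e_1$ for some $C\in\Bas_{\rm s}$ through $e$; equivalently, let $e_1$ take over the role of $e$ in $C$. Concretely, if $e$ lies in at least one basis cycle $C\in\Bas_{\rm s}$, replace $C$ by $C':=C\triangle\{e,e_1\}$, which uses $e_1$ instead of $e$. Then add $\{e,e_1\}$ and the chain $\{e_1,e_2\},\ldots$ as before. After this swap, $e$ has charge $\ch_{\Bas_{\rm s}}(e)-1+1$, which is unchanged. The copy $e_1$ has charge at most $1+2=3$, which still overshoots $2$.

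A better allocation is to split the basis cycles through $e$ among the copies. With $t=\ch_{\Bas_{\rm s}}(e)$, the $t$ cycles are divided among $e,e_1,\ldots,e_k$, and the added $2$-cycles form a chain. Each copy then gets at most about $\lceil t/(k+1)\rceil+2$. This is at most $\max\{t,2\}$ when $t\ge2$ is large relative to $k$, but small cases need care. If this bookkeeping fails, I would fall back to the cited result of Bazargani et al.~\cite[Corollary~3.5]{BazarganiEtAl2026}: adding a single parallel copy keeps the basis number at most $\max\{\text{old},2\}$. Applying it iteratively, one copy at a time, gives the claim for parallel edges. Loops are handled by the first paragraph, since adding a loop only adds the independent cycle $\{\ell\}$ of charge $1$ and leaves all other charges unchanged.
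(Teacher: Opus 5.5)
Your final argument is exactly the paper's proof and is correct: iterate \cite[Corollary~3.5]{BazarganiEtAl2026} one parallel copy at a time, which works because the bound $\max\{\cdot,2\}$ is stable under iteration, and add the independent one-edge cycle $\{\ell\}$ for each loop. The exploratory splitting idea also closes without the citation if you add one copy at a time: reroute $\lfloor t/2\rfloor$ of the $t$ basis cycles through the new copy and add the $2$-cycle, which gives charges at most $\lceil t/2\rceil+1\le\max\{t,2\}$; the paper uses precisely this argument for parallel elements in the proof of \cref{thm:main-regular-rank}.
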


\begin{proof}
Repeatedly apply~\cite[Corollary~3.5]{BazarganiEtAl2026} to restore the deleted members of each parallel class. Restoring a loop adds an independent one-edge cycle, so the congestion becomes at most the maximum of its previous value and~$1$.
\end{proof}

\begin{proof}[Proof of \cref{thm:main-n}]
Let $G$ have $n\ge2$ vertices and form the simple graph $G_{\rm s}$ from \cref{lem:parallel}. The cycle space of $G_{\rm s}$ is the direct sum of the cycle spaces of its connected components, so its basis number is the maximum of their basis numbers. By \cref{prop:simple-connected}, every component with a cycle has basis number at most $C_0\log n$, while acyclic components have basis number zero. Thus $\bn(G_{\rm s})=O(\log n)$, and \cref{lem:parallel} gives $\bn(G)=O(\log n)$.
\end{proof}

\section{Cycle rank and genus}\label{sec:beta-genus}

We derive \cref{thm:main-beta} from \cref{thm:main-n} and then deduce \cref{thm:main-genus}. Bridges occur in no element of the cycle space and may be deleted without changing the basis number. The other reduction is suppression of degree-two vertices.

\begin{lemma}[Series reduction]\label{lem:series}
Let $v$ be a degree-two vertex incident with two distinct non-loop edges $uv$ and $vw$, where possibly $u=w$. Let $G'$ be obtained by suppressing $v$ and replacing those two edges by one edge $uw$ (a loop if $u=w$). Then
$\bn(G')=\bn(G)$ and $\beta(G')=\beta(G)$.
\end{lemma}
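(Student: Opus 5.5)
The plan is to construct an explicit isomorphism of cycle spaces that preserves charges. Let $e_1=uv$, $e_2=vw$ and let $f=uw$ be the new edge of $G'$. Define $\phi:\Z(G)\to\F_2^{E(G')}$ as follows: for $Z\in\Z(G)$, let $\phi(Z)$ agree with $Z$ on $E(G)\setminus\{e_1,e_2\}$ and contain $f$ exactly when $e_1\in Z$.

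The first step is to show that $\phi$ is well defined and linear. Since $v$ has degree two and both incident edges are non-loops, every Eulerian $Z$ contains either both of $e_1,e_2$ or neither, so the parity at $v$ forces $e_1\in Z\iff e_2\in Z$. Hence $\phi(Z)$ is Eulerian in $G'$: replacing the path $u\,v\,w$ by $f$ keeps the degree parity at $u$ and $w$. This also holds when $u=w$, where $f$ is a loop contributing $2$ and $e_1,e_2$ together contribute $2$ at $u$. Linearity is clear.

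The second step is to check that $\phi$ is a bijection onto $\Z(G')$. The inverse replaces $f$ by $\{e_1,e_2\}$, and the same parity check shows the result is Eulerian in $G$. Consequently $\dim\Z(G')=\dim\Z(G)$, that is, $\beta(G')=\beta(G)$. This can also be read off the formula $|E|-|V|+c$: suppression loses one edge and one vertex and preserves components.

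The third step compares charges. $\phi$ maps cycles to cycles, since a cycle through $v$ becomes the cycle using $f$, possibly a loop, and $\phi^{-1}$ maps cycles back to cycles. Hence bases of cycles correspond bijectively via $\mathcal B\mapsto\phi(\mathcal B)$. Charges are preserved: $\ch_{\phi(\mathcal B)}(g)=\ch_{\mathcal B}(g)$ for $g\neq f$, and $\ch_{\phi(\mathcal B)}(f)=\ch_{\mathcal B}(e_1)=\ch_{\mathcal B}(e_2)$. Therefore the congestions are equal, and taking minima gives $\bn(G')=\bn(G)$.

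I expect no real obstacle. The only delicate points are the degenerate case $u=w$, with its loop degree convention, and the requirement that $e_1,e_2$ be distinct non-loop edges, which is needed for the parity argument at $v$. If $E(G')$ were empty, this cannot happen, because $f$ exists. Independently of whether cycles or Eulerian subgraphs are used as basis elements, the correspondence works, as noted in \cref{sec:ingredients}.
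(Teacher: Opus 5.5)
Your proof is correct. It differs from the paper's only in where the work is done. The paper does not argue directly: it obtains $\bn(G')=\bn(G)$ by citing invariance of the basis number under edge subdivision \cite[Lemma~3.6]{BazarganiEtAl2026}, and it gets $\beta(G')=\beta(G)$ from $|E|-|V|+c$, as in your second step.

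Your map $\phi$ is in effect a self-contained proof of that cited lemma. It is a linear isomorphism $\Z(G)\to\Z(G')$ that restricts to a bijection between cycles. It carries the common charge of $e_1$ and $e_2$ onto $f$ and leaves all other charges unchanged, so corresponding bases have equal congestion.

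What your route buys is explicit checking of the multigraph degeneracies that actually arise when suppressions are iterated in the proof of \cref{thm:main-beta}. The paper leaves these to the citation. They are the case $u=w$, where the $2$-cycle $\{e_1,e_2\}$ becomes the loop $f$, and the case where $f$ becomes parallel to an existing $uw$-edge, so a triangle through $v$becomes a $2$-cycle. Your argument also shows directly that the correspondence works for bases made of arbitrary Eulerian subgraphs, not only cycle bases. The paper's route is simply shorter.
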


\begin{proof}
The equality of basis numbers is exactly invariance under edge subdivision~\cite[Lemma~3.6]{BazarganiEtAl2026}. The equality of cycle ranks follows immediately from $|E(G')|=|E(G)|-1$, $|V(G')|=|V(G)|-1$, and $c(G')=c(G)$.
\end{proof}

\begin{proof}[Proof of \cref{thm:main-beta}]
Put $r=\beta(G)\ge2$ and delete all bridges. It suffices to treat one connected component $K$ containing a cycle, of cycle rank $r_K\le r$. Suppress every degree-two vertex to which \cref{lem:series} applies, obtaining a multigraph $H$. By that lemma, $\bn(K)=\bn(H)$ and $\beta(H)=r_K$. If $|V(H)|=1$, then $H$ consists only of loops and $\bn(H)\le1$. Otherwise $H$ has minimum degree at least three. Writing $n_H=|V(H)|$ and $m_H=|E(H)|$, we have $r_K=m_H-n_H+1$ and $3n_H\le2m_H$, and hence $n_H\le2r_K-2\le2r-2$. By \cref{thm:main-n}, $\bn(K)=O(\log n_H)=O(\log r)$. Taking the maximum over the connected components containing cycles proves the theorem.
\end{proof}

For a closed surface $\Sigma$, let $\chi(\Sigma)$ denote its Euler characteristic and put $\eg(\Sigma):=2-\chi(\Sigma)$. Thus $\eg(\Sigma)=2h$ for an orientable surface of genus $h$ and $\eg(\Sigma)=h$ for a non-orientable surface of genus $h$. The Euler genus $\eg(G)$ of a graph is the minimum $\eg(\Sigma)$ over surfaces in which $G$ embeds. An embedding is \emph{$2$-cell}, or \emph{cellular}, if every connected component of $\Sigma\setminus G$ is homeomorphic to an open disc.

By~\cite[Propositions~3.4.1 and 3.4.2]{MoharThomassen2001}, every connected graph has a cellular embedding in a surface of minimum Euler genus.

The following topological reduction is due to Lehner and Miraftab~\cite[Lemma~7]{LehnerMiraftab2026}. It follows from their Lemma~5, which they derive from the main result of Richter and Shank~\cite{RichterShank1984}.

\begin{lemma}[Lehner--Miraftab, after Richter--Shank]\label{lem:LM}
Let $G$ be a connected graph with a $2$-cell embedding in a surface $\Sigma$. Then $G$ contains a subgraph $H$ satisfying
$\beta(H)=2-\chi(\Sigma)$ and $\bn(G)\le\bn(H)+2$.
\end{lemma}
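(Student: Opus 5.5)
The plan is to split $\Z(G)$ as a direct sum of the \emph{face space} and the cycle space of a small subgraph $H$, using a tree--cotree decomposition. Let the $2$-cell embedding have $n$ vertices, $m$ edges and $f$ faces. Euler's formula for cellular embeddings gives $n-m+f=\chi(\Sigma)$. Since $G$ is connected,
\[
\beta(G)=m-n+1=(f-1)+\bigl(2-\chi(\Sigma)\bigr).
\]
For each face $\phi$, let $\partial\phi\in\F_2^{E}$ be the mod-$2$ edge set of its facial walk. An edge traversed twice by the same face cancels. Each edge lies on at most two faces, so $\partial\phi\in\Z(G)$ and every edge has charge at most $2$ in $\{\partial\phi\}$. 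Let $F$ be the span of the face boundaries. I will identify $F$ with the cut space of the geometric dual $G^*$, which is connected because $\Sigma$ is connected. Thus $\dim F=f-1$, and the only relation is that the sum of all face boundaries is zero.

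\textbf{Tree--cotree decomposition.} Choose a spanning tree $T$ of $G$. The dual edges $\{e^*:e\notin T\}$ span a connected subgraph of $G^*$, since removing a tree does not disconnect the surface faces; this is the standard tree--cotree argument. Choose a spanning tree $T^*$ of $G^*$ inside it. Let $X$ be the set of edges lying neither in $T$ nor dual to $T^*$. Then $|X|=m-(n-1)-(f-1)=2-\chi(\Sigma)$.

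Put $H:=T\cup X$. Since $T$ spans $G$, $H$ is connected with $\beta(H)=|X|=2-\chi(\Sigma)$, as required.

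\textbf{The sum is direct.} I claim $F\cap\Z(H)=0$. Take any nonempty proper subset $S$ of the faces; these are exactly the nonzero elements of $F$, up to complement. Since $T^*$ spans $G^*$, some $T^*$-edge $t^*$ joins a face in $S$ to a face outside $S$. Then the primal edge $t$ lies in $\sum_{\phi\in S}\partial\phi$, because $t$ borders exactly one face of $S$. But $t\notin T\cup X=E(H)$, so this sum is not in $\Z(H)$. A dimension count gives
\[
(f-1)+\beta(H)=\beta(G),
\]
hence $\Z(G)=F\oplus\Z(H)$.

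\textbf{Assembling the basis.} Take a cycle basis $\Bas_H$ of $H$ with congestion $\bn(H)$. Add the boundaries of all faces but one; these form a basis of $F$. The union is a basis of $\Z(G)$, and each edge gains charge at most $2$ from the faces. Facial boundaries need not be cycles. The replacement procedure from \cref{sec:ingredients} converts the basis into a cycle basis without increasing any charge. It only swaps elements while keeping a basis, so there is no interaction with $\Bas_H$ to worry about. Hence $\bn(G)\le\bn(H)+2$.

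\textbf{Main obstacle.} The delicate points are topological rather than algebraic:
\begin{enumerate}
\item Checking that $\dim F=f-1$ over $\F_2$ also on non-orientable surfaces and with faces whose walks repeat edges. I would handle this by working with $\F_2$ incidence in $G^*$ throughout, where loops of $G^*$ correspond to edges appearing twice on one face.
\item Checking that the cotree exists inside the complement of $T$, i.e.\ that the dual edges of $E\setminus T$ connect all faces. For this I would argue that cutting $\Sigma$ along a tree leaves it connected, since a tree is contractible and has a disc neighbourhood. Alternatively, I would cite Richter--Shank directly for this step.
\end{enumerate}
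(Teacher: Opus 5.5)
Your proof is correct, but it takes a different route from the paper. The paper gives no argument of its own here and imports the statement from Lehner and Miraftab (their Lemma~7, derived from their Lemma~5 and ultimately from the main theorem of Richter and Shank on the cycle space of an embedded graph). You instead prove the underlying structure directly with a tree--cotree decomposition:
\begin{enumerate}[label=(\roman*),nosep]
\item identifying the face space $F$ with the cut space of the connected dual $G^*$ gives $\dim F=f-1$;
\item $H=T\cup X$ is a spanning tree plus exactly $2-\chi(\Sigma)$ edges;
\item a $T^*$-edge crossing any nonempty proper set of faces certifies $F\cap\Z(H)=0$;
\item Euler's formula closes the dimension count.
\end{enumerate}
Hence $\Z(G)=F\oplus\Z(H)$, and the additive $2$ is just the charge contributed by the face boundaries.

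The citation keeps the paper short and leaves the topology to the literature. Your version is self-contained and uses only Euler's formula for cellular embeddings and the connectivity of $G^*$. Because everything is over $\F_2$, it works unchanged for multigraphs and non-orientable surfaces, and it produces $H$ explicitly.

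Your second ``obstacle'' in fact needs no topology. Suppose the dual edges of $E(G)\setminus T$ did not connect $G^*$, and let $S$ be the vertex set of one component. Then $\varnothing\neq\delta_{G^*}(S)\subseteq\{t^*:t\in T\}$, so $\sum_{\phi\in S}\partial\phi$ would be a nonzero element of $\Z(G)$ supported on the tree $T$, which is impossible. This is the same cut identification you already use for the directness step.

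A minor wording point: $\partial\phi\in\Z(G)$ holds because facial walks are closed. The fact that each edge is traversed twice in total is what bounds the face charges by $2$.
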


\begin{proof}[Proof of \cref{thm:main-genus}]
Assume first that $G$ is connected and put $g=\eg(G)\ge2$. Choose a cellular embedding in a surface $\Sigma$ of Euler genus $g$. Then $2-\chi(\Sigma)=g$, and \cref{lem:LM} gives a subgraph $H$ with $\beta(H)=g$ and $\bn(G)\le\bn(H)+2$. Now \cref{thm:main-beta} gives $\bn(G)=O(\log g)$.

For disconnected graphs, take cycle bases componentwise. Components of Euler genus at most one have basis number at most three by \cref{lem:LM}, since the corresponding subgraph has cycle rank at most one, while every other component has Euler genus at most $g$; hence the same bound follows. If $h$ is the orientable genus, then $\eg(G)\le2h$; if $h$ is the non-orientable genus, then $\eg(G)\le h$. Hence the same logarithmic asymptotic bound holds for either convention.
\end{proof}

\begin{remark}\label{rem:no-recursion}
Lehner and Miraftab use \cref{lem:LM} recursively: after obtaining $H$ of prescribed cycle rank, they bound the genus of $H$ using the theorem of Milgram and Ungar~\cite{MilgramUngar1977} and iterate, leading to $O(\log^2 g)$.
\end{remark}

\section{Regular matroids}\label{sec:regular}

We now prove \cref{thm:main-regular-set,thm:main-regular,thm:main-regular-rank}. We shall occasionally use arbitrary nonzero cycles as basis elements. As observed above for graphs, the same replacement argument works verbatim for binary matroids: every basis of $\Z(M)$ can be replaced by a circuit basis without increasing the charge of any element.

We first derive a priority form of \cref{thm:main-beta}, whose only interesting part is the graphic case. The statement is immediate for cographic matroids and for $R_{10}$.

\begin{lemma}\label{lem:priority}
There is an absolute constant $C_p$ with the following property. Let $B$ be graphic, cographic, or isomorphic to $R_{10}$, and order its elements as $x_1,x_2,\ldots,x_m$. Then $\Z(B)$ has a basis $\Bas$ such that
$\ch_{\Bas}(x_j)\le C_p(1+\log j)$ for $1\le j\le m$.
\end{lemma}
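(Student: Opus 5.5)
The cographic and $R_{10}$ cases are immediate. If $B$ is cographic, Welsh's form of Mac Lane's theorem gives $\bn(B)\le2$, so any optimal basis has $\ch_{\Bas}(x_j)\le2\le C_p(1+\log j)$ once $C_p\ge2$. If $B\cong R_{10}$, then $B$ is a fixed matroid on ten elements, so some circuit basis has congestion bounded by an absolute constant, and we enlarge $C_p$ accordingly. The rest of the plan concerns a graphic $B=M(G)$. After deleting loops (each is a one-edge cycle of charge $1$) and handling parallel classes as in \cref{lem:parallel}, we may work with a simple graph, componentwise. We then follow the two-step scheme of \cref{sec:marginals}, replacing the uniform target $a\log n$ by the edge-dependent targets $\lambda_j:=1+\log j$.

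\emph{Step 1 (rounding with non-uniform thresholds).} Suppose we have a distribution on cycle bases with $\mathbb E[\ch_{\Bas}(x_j)]\le K\lambda_j$ for every $j$. Apply swap rounding to the marginal vector in $P_{M_G}$, exactly as in \cref{prop:simple-connected}, with $A_j=\{C\in\mathcal S(G):x_j\in C\}$. Instead of a single union bound with one threshold, use the Chernoff tail $\Pr[X_j\ge t]\le(e\mu_j/t)^t$ with $t_j=6(K\lambda_j+2\ln(j+1))$. This gives $\Pr[X_j\ge t_j]\le (j+1)^{-4}$, and $\sum_j (j+1)^{-4}<1$. Hence some base has $\ch(x_j)=O(1+\log j)$ simultaneously for all $j$. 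The point is that the union bound is over $j$ with summable failure probabilities, so no dependence on $m$ enters.

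\emph{Step 2 (a priority distribution via LP duality).} As in \cref{lem:minimax}, apply the minimax identity to the scaled matrix $a_{j,\Bas}=\ch_{\Bas}(x_j)/\lambda_j$. It then suffices to show that for every probability vector $w$ on $E(G)$ there is a cycle basis with
\[
\sum_j w_j\,\ch_{\Bas}(x_j)/\lambda_j\le K .
\]
Writing $u_j:=w_j/\lambda_j$, this is a weighted Rizzi-type statement: for nonnegative weights $u$, find a basis of total weight $O\bigl(\sum_j u_j\lambda_j\bigr)$. Here edge $x_j$ is charged only $\log j$ rather than $\log n$.

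To prove this weighted statement I would use a dyadic decomposition. Let $E_k=\{x_j:j\le 2^k\}$. The heavily weighted edges (those with small $\lambda_j$) should lie in few basis cycles, so I would build the basis from the back. First take a basis of the cycle space of $G-E_k$ for the largest relevant $k$. Then, for decreasing $k$, extend by cycles of $G-E_{k-1}$ that complete the basis. The new cycles at level $k$ correspond to a cycle basis of $(G-E_{k-1})/(E(G)\setminus E_k)$, a graph with at most $2^k$ edges. By \cref{thm:main-beta} this graph has a basis of congestion $O(k)$, which is $O(\lambda_j)$ on the edges $x_j$ first introduced at that level. The weight-averaged version only needs Rizzi's bound (\cref{thm:rizzi}) applied on this small contracted graph, with $\log$ of its size $O(k)$.

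\emph{Main obstacle.} The delicate step is lifting each level-$k$ cycle from the contracted graph back to $G$. Lifting inserts paths through the later edges $E(G)\setminus E_k$, and with a careless choice of connecting paths an edge of large index could be used by many lifts from many levels. This is harmless for the bound, since those edges are allowed larger charge, but only if the accumulated charge from all earlier levels stays $O(\log j)$. I would first try choosing the connecting paths inside a fixed low-stretch-type spanning forest of the later part, so that in expectation each later edge is used $O(1)$ times per level above it. Failing that, I would fall back on the Freedman--Hastings process of \cref{rem:fh-shortcut}: there, a fixed edge is deleted with probability $\Omega(1/\log n)$ in each cycle containing it. I would modify the suppression order so that an edge $x_j$ competes only with $O(j)$ edges, making its deletion probability $\Omega(1/\log j)$ and its expected charge $O(1+\log j)$. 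That distribution can then be fed directly into Step 1.
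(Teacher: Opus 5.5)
Your Step~1 and the cographic and $R_{10}$ cases are fine, but the graphic case has a genuine gap exactly where you locate the main obstacle, and neither remedy you propose closes it. The difficulty is not the choice of connecting paths. It is that a basis of the contraction $(G-E_{k-1})/(E(G)\setminus E_k)$ is chosen without regard to the later edges. For instance, let a single later edge $ab$ be a whole component of $G-E_k$, and let the level-$k$ edges be $ax_i,bx_i$ for $1\le i\le 2^{k-2}$. The contraction is then a bouquet of digons $\{ax_i,bx_i\}$. These are its only simple cycles, so its only cycle basis consists of them; it has congestion $1$, so it is perfectly acceptable to \cref{thm:main-beta}. Every lift is a triangle $abx_i$, so $ab$ receives charge $2^{k-2}$ although its index is only about $2^k$. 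The connecting path is forced here, so no low-stretch forest helps, and the Freedman--Hastings modification is not substantiated. The missing idea is to avoid contraction altogether. Let $F_k$ be a spanning forest of $G-E_k$ and put $H_k:=F_k\cup(E_k\setminus E_{k-1})$. Then $\Z(G-E_{k-1})=\Z(G-E_k)\oplus\Z(H_k)$ and $\beta(H_k)\le|E_k\setminus E_{k-1}|$. Hence \cref{thm:main-beta}, applied to $H_k$ itself, yields the new basis vectors with congestion $O(\log|E_k\setminus E_{k-1}|)$ on \emph{all} edges of $H_k$, forest edges included, and there is nothing to lift.

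Even with this repair, dyadic blocks are too fine. An edge $x_j$ with $2^{t-1}<j\le 2^t$ may lie in $F_k$ for every $k<t$, so the argument only gives charge $\sum_{k\le t}O(k)=O(\log^2 j)$. The same loss occurs in your weighted version if Rizzi's theorem is applied levelwise. The paper instead uses doubly exponential blocks $P_s=\{x_j:N_{s-1}<j\le N_s\}$ with $N_s=2^{2^s}$ and $N_{-1}=0$. Level $s$ then contributes congestion $O(\log N_s)=O(2^s)$, and an edge of $P_t$, which occurs only in $H_0,\dots,H_t$, gets charge $O(\sum_{s\le t}2^s)=O(2^t)=O(1+\log j)$. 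Since \cref{thm:main-beta} already controls worst-case congestion for multigraphs, this construction is deterministic. Your LP duality, non-uniform rounding and reduction to simple graphs are therefore unnecessary.
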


\begin{proof}
First suppose that $B$ is the cycle matroid of a multigraph $G$. For $s\ge0$ put $N_s=2^{2^s}$ and set $N_{-1}=0$. Let $P_s:=\{x_j:N_{s-1}<j\le N_s\}\subseteq E$, truncating the last nonempty block. Put $G_{-1}=G$ and, for $s\ge0$, let $G_s$ be obtained from $G$ by deleting $P_0\cup\cdots\cup P_s$. Choose a spanning forest $F_s$ of $G_s$ and put $H_s=F_s\cup P_s$. Then
\begin{equation}\label{eq:priority-direct-sum}
\Z(G_{s-1})=\Z(G_s)\oplus\Z(H_s).
\end{equation}
Indeed, the intersection is trivial because it is supported on the forest $F_s$. 

To obtain that it is spanning, 
take \(C\in\mathcal Z(G_{s-1})\). For each edge \(e\) of \(C\) lying in \(G_s\setminus F_s\), its fundamental cycle \(C_e\) consists of \(e\) together with the unique path in \(F_s\) joining its endpoints.
Replacing \(C\) by \(C\oplus C_e\) cancels \(e\). Every other affected edge belongs to \(F_s\), so this introduces no other edge of \(G_s\setminus F_s\).
After repeating this, the remaining cycle vector \(C'\) is supported on \(F_s\cup P_s=H_s\). Thus the original vector decomposes as $$ C=C'\oplus D, \qquad C'\in\mathcal Z(H_s),\quad D\in\mathcal Z(G_s),$$
where \(D\) is the sum of the fundamental cycles used. 

Since adding \(|P_s|\) edges to a forest creates at most \(|P_s|\) independent cycles, we have $\dim(\Z(H_s))\le|P_s|\le N_s$. If this dimension is at most one, $H_s$ has a cycle basis of congestion at most one; otherwise \cref{thm:main-beta} gives congestion $O(\log N_s)=O(2^s)$. Thus the same asymptotic bound holds in both cases. Iterating \eqref{eq:priority-direct-sum} shows that the union of these bases is a basis of $\Z(G)$. If $x_j\in P_t$, then $x_j$ occurs only in $H_0,\ldots,H_t$, and hence has charge
\[
O\!\left(\sum_{s=0}^t2^s\right)=O(2^t)=O(1+\log j).
\]
If $B=M^*(G)$ is cographic, the vertex cuts $\delta(v)$, with one vertex omitted from each connected component of $G$, form a basis of $\Z(B)$ in which every element has charge at most two. Finally, $R_{10}$ is fixed and is taken care of by increasing $C_p$.
\end{proof}

Let $M_1,M_2$ be binary matroids on ground sets $E_1,E_2$, and put $S=E_1\cap E_2$. Their sum is the binary matroid $M$ on $(E_1\cup E_2)\setminus S$ whose cycles are precisely the sets $C_1\oplus C_2$ with $C_i\in\Z(M_i)$ for $i=1,2$ and $C_1\cap S=C_2\cap S$. Here $\oplus$ denotes symmetric difference, so the common elements cancel. Following~\cite[Definitions~3.12--3.13]{GurjarVishnoi2021}, this is called
\begin{enumerate}[label=(\roman*),nosep]
\item a \emph{$1$-sum} if $S=\varnothing$;
\item a \emph{$2$-sum} if $S=\{e\}$, the element $e$ is neither a loop nor a coloop in either summand, and $|E_1|,|E_2|\ge3$;
\item a \emph{$3$-sum} if $S$ is a three-element circuit of both summands, no cocircuit of either summand is contained in $S$, and $|E_1|,|E_2|\ge7$.
\end{enumerate}
In these cases we write $M=M_1\oplus_k M_2$, where $k\in\{1,2,3\}$.

\begin{lemma}\label{lem:star}
Let $B,M_1,\ldots,M_t$ be binary matroids and $M$ obtained by $1$-, $2$-, or $3$-sums of the $M_1,\ldots,M_t$ to $B$ along pairwise disjoint sets $S_i=E(B)\cap E(M_i)$, where $|S_i|\in\{0,1,3\}$. Assume that $E(M_i)\cap E(M_j)=\varnothing$ for $i\ne j$. 
Let $\Bas$ be a basis of $\Z(B)$ and, for every $i$, let $\Bas_i$ be a basis of $\Z(M_i\setminus S_i)$. Put $q_i:=\max_{s\in S_i}\ch_{\Bas}(s)$ when $S_i\ne\varnothing$, and $q_i=0$ otherwise. Then $\Z(M)$ has a basis in which every element of $E(B)\setminus\bigcup_iS_i$ has charge at most its charge in $\Bas$, while every element of $E(M_i)\setminus S_i$ has charge at most
$\cng(\Bas_i)+3q_i$.
\end{lemma}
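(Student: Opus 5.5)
The idea is to lift each cycle of the basis $\Bas$ of $\Z(B)$ to a cycle of $M$ by patching its intersection with each $S_i$ using a fixed cycle of $M_i$. For every $i$ with $S_i\ne\varnothing$ and every subset $T\subseteq S_i$ of the form $C\cap S_i$ with $C\in\Z(B)$, we will choose a cycle $D_i(T)\in\Z(M_i)$ with $D_i(T)\cap S_i=T$.

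For a $2$-sum, $S_i=\{e\}$. Since $e$ is not a loop or coloop of $M_i$, there is a circuit of $M_i$ through $e$; take $D_i(\{e\})$ to be this circuit and $D_i(\varnothing)=\varnothing$.

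For a $3$-sum, $S_i$ is a triangle of $M_i$. The traces $C\cap S_i$ of cycles of $B$ lie in the even-or-full pattern of the cycle space restricted to $S_i$. For each $s\in S_i$, the set $S_i\setminus\{s\}$ is not a cocircuit-containing set, so $M_i$ has a cycle $D$ with $D\cap S_i=\{s\}$. We will use this: pick $D_i^{s}$ with $D_i^{s}\cap S_i=\{s\}$ for each $s\in S_i$, and set $D_i(T)=\bigoplus_{s\in T}D_i^s$. Existence of $D_i^s$ needs care. Since $S_i$ is a circuit, the restriction map $\Z(M_i)\to\F_2^{S_i}$ has image containing $S_i$. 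The image is all of $\F_2^{S_i}$ exactly when no cocircuit lies in $S_i$, by orthogonality of the cycle space and the cocycle space restricted to $S_i$: the image is the orthogonal complement of the cocycles contained in $S_i$. Verifying this duality statement is the main technical point. Rather than arguing it from scratch, I would try to cite that, for binary matroids, the projection of $\Z(M)$ onto $S$ is the annihilator of $\{D^*\cap S: D^*\text{ a cocycle with } D^*\subseteq S\}$.

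Now define the lifted family. For $C\in\Bas$, put
\[
\widehat C := C\oplus\bigoplus_i D_i(C\cap S_i),
\]
which is a cycle of $M$ by the definition of the sum, taking $C_1=C$ and $C_2=\bigoplus_i D_i(C\cap S_i)$ summed over the disjoint summands. Add to this all elements of $\Bas_i$; these are cycles of $M_i$ avoiding $S_i$, hence cycles of $M$.

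For spanning, take $Z\in\Z(M)$, written as $Z=C\oplus\bigoplus_i C_i$ with matching traces. Expand $C$ in $\Bas$. Then $Z\oplus\bigoplus_{C'}\widehat{C'}$ is $\bigoplus_i(\text{a cycle of }M_i\text{ with trace }\varnothing)$, by linearity of $T\mapsto D_i(T)$ and matching traces. Each such cycle lies in $\Z(M_i\setminus S_i)$, which $\Bas_i$ spans. For independence, compare dimensions, or note that the projection onto $E(B)\setminus\bigcup S_i$ together with the traces recovers $C$ (the map $C\mapsto\widehat C$ is injective modulo $\bigoplus\Z(M_i\setminus S_i)$). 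A dimension count via the sum formula is the alternative if injectivity is delicate, e.g.\ when $C\subseteq\bigcup S_i$. There, $\widehat C$ still determines the traces $C\cap S_i$ and hence $C$ only if the $D_i^s$ are independent modulo $\Z(M_i\setminus S_i)$, which holds since their traces are independent.

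For the charge count, elements of $E(B)\setminus\bigcup S_i$ are touched only by $\widehat C$ with the same membership as in $C$. An element $x\in E(M_i)\setminus S_i$ lies in at most $\cng(\Bas_i)$ members of $\Bas_i$. It lies in $\widehat C$ only if $C\cap S_i\neq\varnothing$, and there are at most $\sum_{s\in S_i}\ch_\Bas(s)\le 3q_i$ such $C$. This gives the bound $\cng(\Bas_i)+3q_i$.
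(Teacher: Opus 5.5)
Your lifting construction, spanning argument and charge count are the paper's. Your orthogonality criterion for surjectivity of $\Z(M_i)\to\F_2^{S_i}$ is exactly the paper's one-line argument: a nonzero vector orthogonal to the image, extended by zero, is a cocycle of $M_i$ supported in $S_i$. So nothing needs to be cited. Your linear choice $D_i(T)=\bigoplus_{s\in T}D_i^s$ is harmless but unnecessary. Traces add under $\oplus$, so any cycles $D_i(A)$ with $D_i(A)\cap S_i=A\cap S_i$, empty when that trace is empty, make the spanning step work.

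The step that fails is independence. Once a $3$-sum occurs, the family $\{\widehat C:C\in\Bas\}\cup\bigcup_i\Bas_i$ is \emph{not} linearly independent.

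\begin{itemize}
\item \emph{Explicit dependency.} Since $S_i$ is a circuit of $B$, write $S_i=\bigoplus_{C\in\mathcal A}C$ with $\varnothing\ne\mathcal A\subseteq\Bas$. Then $\bigoplus_{C\in\mathcal A}\widehat C=S_i\oplus D_i(S_i)$. This is a cycle of $M_i$, because $S_i$ is also a circuit of $M_i$, and it avoids $S_i$. Hence it is a combination of members of $\Bas_i$.
\item \emph{Dimension count.} For $M=B\oplus_3 M_1$ one has $\dim\Z(M)=\dim\Z(B)+\dim\Z(M_1)-4$. But $\dim\Z(M_1\setminus S_1)=\dim\Z(M_1)-3$, since it is the kernel of the surjective trace map. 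So the family has one member too many for each $3$-sum.
\end{itemize}

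Your injectivity argument confuses the vectors $D_i^s$ with their restrictions to $E(M)$. Independence of the traces does make the full vectors $D_i^s$ independent modulo $\Z(M_i\setminus S_i)$. However, $\widehat C$ only sees $D_i(T)\setminus S_i$, and the map $T\mapsto D_i(T)\setminus S_i$ modulo $\Z(M_i\setminus S_i)$ has kernel $\{\varnothing,S_i\}$.

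The fix is to drop the independence claim and, as the paper does, select a basis of $\Z(M)$ inside the spanning family. Discarding members can only lower charges, so your bounds $\ch\le\ch_{\Bas}$ on $E(B)\setminus\bigcup_iS_i$ and $\cng(\Bas_i)+3q_i$ on $E(M_i)\setminus S_i$ survive unchanged.
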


\begin{proof}
For each $i$, the map $\Z(M_i)\to\F_2^{S_i}$, sending a cycle to its intersection with $S_i$, is onto. This is clear for $S_i=\varnothing$. If $|S_i|=1$, the common element is not a coloop, so it belongs to a circuit. If $|S_i|=3$ and the image were proper, a nonzero vector orthogonal to the image would give a cocycle supported on $S_i$, and hence a cocircuit contained in $S_i$, contrary to the definition of a $3$-sum.

For every $A\in\Bas$ and every $i$, choose a cycle $D_i(A)\in\Z(M_i)$ with  $D_i(A)\cap S_i=A\cap S_i$, taking $D_i(A)=\varnothing$ when this intersection is empty. Then, $\widehat A:=A\oplus D_1(A)\oplus\cdots\oplus D_t(A)$ is a cycle of $M$.

The family consisting of all $\widehat A$, together with the bases $\Bas_i$, spans $\Z(M)$. Indeed, any $C\in\Z(M)$ can be written $C=C_B\oplus C_1\oplus\cdots\oplus C_t$, where $C_B\in\Z(B)$, $C_i\in\Z(M_i)$, and $C_i\cap S_i=C_B\cap S_i$. Since $\Bas$ is a basis, there is a subfamily $\mathcal A\subseteq\Bas$ with $C_B=\bigoplus_{A\in\mathcal A}A$. For each $i$, put $R_i:=C_i\oplus\bigoplus_{A\in\mathcal A}D_i(A)$. This is a cycle of $M_i$. Moreover, the sum of the $D_i(A)$ has the same intersection with $S_i$ as $C_B$, and hence as $C_i$. These elements therefore cancel, so $R_i\cap S_i=\varnothing$. Thus $R_i\in\Z(M_i\setminus S_i)$ and can be expressed using the basis $\Bas_i$. Finally,
\[
C=\bigoplus_{A\in\mathcal A}\widehat A\oplus R_1\oplus\cdots\oplus R_t,
\]
which proves the spanning assertion. Choose a basis $\Bas'\subseteq\{\widehat A:A\in\Bas\}\cup\bigcup_{i=1}^t\Bas_i$.

We now bound the charges in $\Bas'$. If $e\in E(B)\setminus\bigcup_iS_i$, then none of the cycles $D_i(A)$ or the members of $\Bas_i$ contains $e$. Consequently, $e$ occurs in $\widehat A$ precisely when it occurs in $A$. Hence, its charge in $\Bas'$ is at most $\ch_{\Bas}(e)$. Next, let $e\in E(M_i)\setminus S_i$ for some $i$. Among the bases $\Bas_j$, only $\Bas_i$ can contribute to the charge of $e$, giving at most $\cng(\Bas_i)$ occurrences. Moreover, $e$ can occur in $\widehat A$ only through $D_i(A)$. Since $D_i(A)=\varnothing$ when $A\cap S_i=\varnothing$, the number of these additional occurrences is at most the number of members of $\Bas$ meeting $S_i$. Each such member is counted at least once in $\sum_{s\in S_i}\ch_{\Bas}(s)$, so this number is at most $3q_i$. Therefore the charge of $e$ in $\Bas'$ is at most $\cng(\Bas_i)+3q_i$.\end{proof}

\begin{proof}[Proof of \cref{thm:main-regular-set}]
We argue by induction on $m$, with the bounded cases absorbed by increasing $C_E$. By Seymour's decomposition theorem~\cite{Seymour1980}, in the unordered form of~\cite[Lemma~5.7]{GurjarVishnoi2021}, $M$ has a decomposition tree $T$ whose nodes are graphic, cographic, or isomorphic to $R_{10}$, such that adjacent node matroids meet in a separator of size $0$, $1$, or $3$, nonadjacent node matroids are disjoint, and every subtree represents the corresponding iterated sum.

Give each node weight equal to the number of elements of $E(M)$ occurring only in that node, and choose a weighted centroid $v$ of the tree $T$. Let $B$ be the matroid at $v$, let $T_1,\ldots,T_t$ be the components of $T-v$, and let $M_i$ be the iterated sum of the matroids indexed by $V(T_i)$, with $S_i:=E(B)\cap E(M_i)$. Put $m_i:=|E(M_i)\setminus S_i|$ and omit indices with $m_i=0$, for which $S_i=\varnothing$. Reindex so that $m_1\ge\cdots\ge m_t$. Let $m_0:=|E(B)\cap E(M)|$. Every element of $M$ is counted exactly once, either by $m_0$ or by one of the numbers $m_1,\ldots,m_t$, so $m_0+\sum_{j=1}^t m_j=m$. The ordering $m_1\ge\cdots\ge m_t$ ensures that $m_j\ge m_i$ whenever $j\le i$. Hence
\begin{equation}\label{eq:ordered-sizes}
i m_i\le\sum_{j=1}^{i}m_j
\le\sum_{j=1}^{t}m_j=m-m_0\le m.
\end{equation}

Also,
\begin{equation}\label{eq:basic-size}
|E(B)|=m_0+\sum_{i=1}^t|S_i|\le m_0+3t\le m_0+3\sum_{i=1}^t m_i\le3m.
\end{equation}

Order $E(B)$ by listing first the elements of $S_1$, then those of $S_2$, and so on, followed by the elements of $E(B)\cap E(M)$. Choose a basis $\Bas$ of $\Z(B)$ as in \cref{lem:priority}, and define $q_i$ as in \cref{lem:star}. After increasing an absolute constant $C'$ if necessary, we have \begin{equation}\label{eq:priority-charges}
\begin{aligned}
q_i&\le C'(1+\log i),\\
\ch_{\Bas}(e)&\le C'\log m
\end{aligned}
\end{equation}
for every $i$ and every element $e\in E(B)\cap E(M)$. Here the elements of $S_i$ occur among the first $3i$ positions, while \eqref{eq:basic-size} controls the remaining positions.

Since $m_i$ is the total weight of $T_i$ and $v$ is a weighted centroid, we have
\begin{equation}\label{eq:centroid-size}
m_i\le m/2<m.
\end{equation}
If $m_i\ge2$, induction gives a circuit basis $\Bas_i$ of $M_i\setminus S_i$ of congestion at most $C_E\log m_i$. By \cref{lem:star} and \eqref{eq:priority-charges}, every $e\in E(M_i)\setminus S_i$ then has charge at most $C_E\log m_i+3C'(1+\log i)$. By \eqref{eq:ordered-sizes} and \eqref{eq:centroid-size}, we have $i\le m/m_i$ and $m/m_i\ge2$, so $1+\log i\le2\log(m/m_i)$. Choosing $C_E\ge6C'$ makes the preceding charge at most
\[
C_E\log m_i+C_E\log(m/m_i)=C_E\log m.
\]
If $m_i=1$, then $M_i\setminus S_i$ has a circuit basis of congestion at most one, and the same conclusion follows after one further enlargement of $C_E$. The elements of $E(B)\cap E(M)$ satisfy the desired bound by \eqref{eq:priority-charges}. Finally replace the resulting cycle basis by a circuit basis as above.
\end{proof}

Suppose that $N$ is regular and that either $N$ or $N^*$ is simple of rank $k\ge2$. Heller's extremal theorem for unimodular matrices~\cite[Theorem~4.2]{Heller1957} gives $|E(N)|\le\binom{k+1}{2}\le k^2$, since $N$ and $N^*$ have the same ground set. Applying \cref{thm:main-regular-set} to $N$, we obtain
\begin{equation}\label{eq:heller-reduction}
\bn(N)\le2C_E\log k.
\end{equation}

\begin{proof}[Proof of \cref{thm:main-regular}]
Let $d=\dim(\Z(M))\ge2$. Coloops belong to no cycle and may be deleted without changing either $d$ or the basis number. If $\{e,f\}$ is a series pair, every binary cycle meets this cocircuit evenly, so its $e$- and $f$-coordinates agree. Projection onto $E(M)\setminus\{e\}$ is therefore an isomorphism $\Z(M)\longrightarrow\Z(M/e)$ that preserves both $d$ and the optimal congestion. Repeating these operations gives a cosimple regular matroid $N$ with $\dim(\Z(N))=d$ and $\bn(N)=\bn(M)$. The dual $N^*$ is a simple regular matroid of rank $d$. Thus \eqref{eq:heller-reduction} gives $\bn(M)\le2C_E\log d$, proving the dimension bound with $C_{\rm dim}\ge2C_E$.
\end{proof}


\begin{proof}[Proof of \cref{thm:main-regular-rank}]
Let $r=r(M)\ge2$ and let $N$ be the simplification of $M$, obtained by deleting loops and retaining one element from each parallel class. Restoring a loop adds an independent one-element circuit. Suppose that $Q'$ is obtained from $Q$ by adding $f$ parallel to a nonloop $e$, and let $\Bas$ be a circuit basis of $\Z(Q)$. Put $t:=\ch_{\Bas}(e)$ and choose $\mathcal A\subseteq\{C\in\Bas:e\in C\}$ with $|\mathcal A|=\lfloor t/2\rfloor$. Put $\Bas':=(\Bas\setminus\mathcal A)\cup\{(C\setminus\{e\})\cup\{f\}:C\in\mathcal A\}$. The linear map $\pi:\Z(Q')\to\Z(Q)$ defined by $\pi(x)_e=x_e+x_f$ and $\pi(x)_g=x_g$ for $g\in E(Q)\setminus\{e\}$ maps $\Bas'$ bijectively to $\Bas$ and has kernel spanned by $\{e,f\}$. Hence $\Bas'\cup\{\{e,f\}\}$ is a circuit basis of $\Z(Q')$. The new charges at $e$ and $f$ are at most $\lceil t/2\rceil+1\le\max\{t,2\}$, and all other charges are unchanged. Consequently $\bn(M)\le\max\{\bn(N),2\}$. Since $N$ is simple of rank $r$, \eqref{eq:heller-reduction} gives $\bn(M)\le\max\{2C_E\log r,2\}=O(\log r)$.
\end{proof}

\section{Binary matroids}\label{sec:binary}

\begin{observation}
Let $M$ be a binary matroid with $m$ elements, rank $r$, and $d=\dim(\Z(M))$. We have $\bn(M)\le d$, $\bn(M)\leq m$ and $\bn(M)\le r+1$. 
\end{observation}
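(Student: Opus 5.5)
Each bound will come from an explicit circuit basis, obtained via the replacement remark at the start of \cref{sec:regular}: any basis of $\Z(M)$ can be turned into a circuit basis without increasing any charge. The bounds $\bn(M)\le d$ and $\bn(M)\le m$ are then immediate. Any circuit basis $\Bas$ has exactly $d$ members, so every element lies in at most $d$ of them. Since $d\le m$, the second bound follows from the first. If $d=0$, the empty basis has congestion $0$, consistent with the convention $\cng=0$.

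For $\bn(M)\le r+1$ I will use fundamental circuits. Fix a base $T$ of $M$, of size $r$. For each $e\notin T$ let $C_e$ be the fundamental circuit of $e$ with respect to $T$. These circuits are independent, since $e$ occurs only in $C_e$, and there are $m-r=d$ of them, so they form a basis of $\Z(M)$. An element $e\notin T$ has charge exactly $1$. An element $t\in T$ could, however, lie in many fundamental circuits, so the basis has to be modified to control the charges on $T$.

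For the elements of $T$ I will pass to a reduced standard representation. Represent $M$ over $\F_2$ by $[I_r\mid A]$ with columns indexed by $T$ and $E\setminus T$. The cycle space is the kernel, spanned by the columns of $\begin{pmatrix}A\\ I\end{pmatrix}$, and the $t$-coordinate of $C_e$ is $A_{t,e}$. The plan is to choose a different basis of $\Z(M)$ by row-reducing the coordinates indexed by $T$, so that each coordinate in $T$ is nonzero in few basis vectors. Concretely, perform Gaussian elimination on the $d\times r$ matrix $A^{\mathsf T}$, whose rows are the $T$-parts of the $C_e$. Combining basis vectors by row operations preserves the span, and the resulting echelon form has at most $r$ nonzero rows, each with a pivot. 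Put the basis vectors corresponding to zero rows aside: they are supported off $T$. A zero row arises as a sum of several $C_e$, so such a vector can contain several elements outside $T$, and I must control how often each $e\notin T$ is reused. That is the main obstacle.

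To handle it I will count differently. Take reduced row echelon form and choose the coordinate order so that the vectors supported off $T$ form a basis of $\Z(M\setminus T)$. Since $T$ is a base, $M\setminus T$ contains no circuit, so $\Z(M\setminus T)=0$, which means that at most $r$ vectors survive with nonzero $T$-part, a contradiction unless $d\le r$. So when $d>r$ the vectors with zero $T$-part really exist, and this approach fails in general. I will therefore fall back on bounding by $d$ whenever $d\le r+1$. In the remaining case $d\ge r+2$ I expect the intended argument to work with cocircuits instead, dual to the cographic argument in \cref{lem:priority}: each coordinate vector is constrained through the $r$ fundamental cocircuits, and a basis of $\Z(M)$ can be chosen whose charges are bounded by the number of cocircuit constraints plus one. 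Making this last duality step precise is where I expect the real difficulty to lie.
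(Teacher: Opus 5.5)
Your bounds $\bn(M)\le d$ and $\bn(M)\le m$ are fine. The proof of $\bn(M)\le r+1$, however, has a genuine gap. You obtain it only when $d\le r+1$, via $\bn(M)\le d$. The case $d\ge r+2$ is the only case in which $r+1$ improves on $d$, and you leave it to an unspecified ``cocircuit duality'', which is not an argument.

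There is also a slip along the way. It is $T$, not $E\setminus T$, that contains no circuit. The cobase $E\setminus T$ contains no \emph{cocircuit}, and in fact $\dim\Z(M\setminus T)=d-r(E\setminus T)\ge d-r$. That is exactly why your row reduction produces vectors with zero $T$-part, so there is no contradiction. Your diagnosis that fundamental circuits of a fixed base cannot work is correct: for $m$ parallel elements ($r=1$), the base element gets charge $m-1$ while $r+1=2$. But the remedy is not duality; it is to let the base move.

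The paper's argument does this greedily.
\begin{enumerate}[label=(\roman*),nosep]
\item Order $E(M)$ and process the elements in increasing order, maintaining an independent set $I$ that spans everything processed so far.
\item If $I\cup\{e\}$ is independent, add $e$ to $I$. Otherwise record the unique circuit $C\subseteq I\cup\{e\}$, let $e_C$ be the smallest element of $C$, and replace $I$ by $(I\cup\{e\})\setminus\{e_C\}$.
\end{enumerate}
An element enters $I$ only at the step where it is processed, so $e_C$ never lies in a later recorded circuit. The recorded circuits are therefore triangular with respect to the distinct elements $e_C$, hence independent. Since $I$ grows exactly at non-recording steps and ends as a base, exactly $m-r=d$ circuits are recorded, and they form a circuit basis.

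Now fix $e$, and let $I_e$ be the independent set just before $e$ is processed. Every recorded circuit $C$ containing $e$ is recorded at or after that step and satisfies $e_C\le e$. If $e_C<e$, then $e_C$ was in $I$ when $C$ was recorded, so it has stayed in $I$ since it was processed; thus $e_C\in I_e$. Hence $C\mapsto e_C$ injects the recorded circuits containing $e$ into $I_e\cup\{e\}$, and the charge of $e$ is at most $|I_e|+1\le r+1$.
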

\begin{proof}
Since a circuit basis has $d$ members we clearly have $\bn(M)\le d$. Further, $d=\dim(\Z(M))=m-r$ and $\bn(M)\leq m$. To see $\bn(M)\le r+1$, order $E(M)$ and initialize $I=\varnothing$.
Process the elements in increasing order. If adding $e$ to $I$ would create a circuit $C$, then record $C$ and let $e_C$ be its smallest element. Otherwise, add $e$ to $I$. After each step, $I$ is independent in
$M$ and spans all elements processed so far. Hence exactly $\dim(\Z(M))=m-r$ circuits
are recorded. Each such $C$ contains its removed element $e_C$, which
occurs in no later recorded circuit, so the recorded circuits are
linearly independent in $\Z(M)$ and form a circuit basis.

Fix $e\in E(M)$, and let $I_e$ be the independent set maintained
immediately before $e$ is processed. Every recorded circuit $C$
containing $e$ is recorded at or after this step and satisfies $e_C\le e$.
If $e_C<e$, then $e_C\in I_e$. Since
distinct recorded circuits remove distinct elements, the map
$C\mapsto e_C$ injects the recorded circuits containing $e$ into
$I_e\cup\{e\}$. Thus $e$ has charge at most $|I_e|+1\le r+1$.
\end{proof}

The bound in $d$ is attained by $M_s=PG(s-1,2)^*$ for $s\ge1$, where $PG(s-1,2)$ denotes the binary projective geometry of rank $s$. The cycle space $\Z(M_s)$ has dimension $s$ and is the row space of the matrix $A$ whose columns are all nonzero vectors of $\F_2^s$. Any basis is given by the rows of $UA$ for an invertible matrix $U$. Since $UA$ still contains the all-one column, $\bn(M_s)=s$.

Linear growth in $m$ and $r$ occurs as well. For some fixed $\delta>0$ and every sufficiently large $m$, the standard random-code argument underlying the Gilbert--Varshamov bound~\cite{Varshamov1957,MacWilliamsSloane1977} gives a binary $\lceil2m/3\rceil\times m$ matrix $A$ of full row rank whose kernel has no nonzero vector of weight below $\delta m$. Its column matroid $M$ has $r=\lceil2m/3\rceil$ and $d=\lfloor m/3\rfloor$. Every circuit basis $\Bas$ satisfies $m\cng(\Bas)\ge\sum_{C\in\Bas}|C|\ge\delta md$, so $\bn(M)=\Theta(m)=\Theta(r)=\Theta(d)$. Thus the linear upper bounds are sharp in order, and none of our logarithmic bounds extends to all binary matroids.

\begin{question}\label{q:binary}
For which natural subclasses of binary matroids does a logarithmic bound in the rank or the dimension of the cycle space hold?
\end{question}

\subsection*{Statement of AI use}\label{sec:ai}

The proof was found with the help of OpenAI's GPT-5.6 Sol. The model was also used as a research aid for exploring proof strategies, locating potentially relevant literature, and assisting with the drafting and revision of the manuscript. The author independently checked the arguments and references and takes full responsibility for the mathematical correctness and final content of the paper.

\subsection*{Acknowledgements}\label{sec:ack}

We thank Stijn Cambie for pointing out the shorter proof using Freedman and Hastings from \Cref{rem:fh-shortcut} and the proof for regular matroids, both of which he found with the help of GPT-5.6. We also thank Chandra Chekuri for help with using their result~\cite{ChekuriVondrakZenklusen2010}. The author was supported through grant PID2022-137283NB-C22 funded by MICIU/AEI/10.13039/501100011033 by ERDF/EU and through the Severo Ochoa and María de Maeztu Program for Centers and Units of Excellence in R\&D (CEX2020-001084-M) and ANR project MIMETIQUE: ANR-25-CE48-4089-01.

\bibliographystyle{amsplain}
\bibliography{Logarithmic_basis_number_v20_refs}

@article{BanksSchmeichel1982,
  author  = {John Anthony Banks and Edward F. Schmeichel},
  title   = {The basis number of the {$n$}-cube},
  journal = {Journal of Combinatorial Theory, Series B},
  volume  = {33},
  year    = {1982},
  number  = {2},
  pages   = {95--100}
}

@article{BazarganiEtAl2026,
  author  = {Saman Bazargani and Therese Biedl and Prosenjit Bose and Anil Maheshwari and Babak Miraftab},
  title   = {The basis number of 1-planar graphs},
  journal = {Annals of Combinatorics},
  year    = {2026},
  note    = {DOI: 10.1007/s00026-025-00804-8; preprint arXiv:2412.18595}
}

@inproceedings{ChekuriVondrakZenklusen2010,
  author    = {Chandra Chekuri and Jan Vondr\'{a}k and Rico Zenklusen},
  title     = {Dependent randomized rounding via exchange properties of combinatorial structures},
  booktitle = {51st Annual IEEE Symposium on Foundations of Computer Science (FOCS 2010)},
  publisher = {IEEE Computer Society},
  year      = {2010},
  pages     = {575--584},
  note      = {Full version: arXiv:0909.4348, under the title \emph{Dependent Randomized Rounding for Matroid Polytopes and Applications}; theorem and section numbering cited in the text refers to the full version}
}

@article{FreedmanHastings2021,
  author  = {Michael Freedman and Matthew B. Hastings},
  title   = {Building manifolds from quantum codes},
  journal = {Geometric and Functional Analysis},
  volume  = {31},
  year    = {2021},
  number  = {4},
  pages   = {855--894},
  note    = {Preprint: arXiv:2012.02249}
}

@misc{GenietGiocanti2026,
  author = {Colin Geniet and Ugo Giocanti},
  title  = {Basis number of graphs excluding minors},
  year   = {2026},
  note   = {arXiv:2601.05195v3}
}

@article{GurjarVishnoi2021,
  author  = {Rohit Gurjar and Nisheeth K. Vishnoi},
  title   = {On the number of circuits in regular matroids (with connections to lattices and codes)},
  journal = {SIAM Journal on Discrete Mathematics},
  volume  = {35},
  year    = {2021},
  number  = {3},
  pages   = {1688--1705},
  note    = {Preprint: arXiv:1807.05164}
}

@article{Heller1957,
  author  = {I. Heller},
  title   = {On linear systems with integral valued solutions},
  journal = {Pacific Journal of Mathematics},
  volume  = {7},
  year    = {1957},
  number  = {3},
  pages   = {1351--1364}
}

@article{KavithaEtAl2009,
  author  = {Telikepalli Kavitha and Christian Liebchen and Kurt Mehlhorn and Dimitrios Michail and Romeo Rizzi and Torsten Ueckerdt and Katharina A. Zweig},
  title   = {Cycle bases in graphs: Characterization, algorithms, complexity, and applications},
  journal = {Computer Science Review},
  volume  = {3},
  year    = {2009},
  number  = {4},
  pages   = {199--243}
}

@article{MacLane1937,
  author  = {Saunders Mac Lane},
  title   = {A combinatorial condition for planar graphs},
  journal = {Fundamenta Mathematicae},
  volume  = {28},
  year    = {1937},
  pages   = {22--32}
}

@book{MacWilliamsSloane1977,
  author    = {F. J. MacWilliams and N. J. A. Sloane},
  title     = {{The Theory of Error-Correcting Codes}},
  series    = {North-Holland Mathematical Library},
  volume    = {16},
  publisher = {North-Holland Publishing Company},
  address   = {Amsterdam},
  year      = {1977}
}

@article{LehnerMiraftab2026,
  author  = {Florian Lehner and Babak Miraftab},
  title   = {Sparse cycle bases for graphs with bounded genus},
  journal = {European Journal of Combinatorics},
  volume  = {135},
  year    = {2026},
  pages   = {104371}
}

@article{Longyear1980,
  author  = {Judith Q. Longyear},
  title   = {The circuit basis in binary matroids},
  journal = {Journal of Number Theory},
  volume  = {12},
  year    = {1980},
  number  = {1},
  pages   = {71--76}
}

@article{MilgramUngar1977,
  author  = {Martin Milgram and Peter Ungar},
  title   = {Bounds for the genus of graphs with given {Betti} number},
  journal = {Journal of Combinatorial Theory, Series B},
  volume  = {23},
  year    = {1977},
  number  = {2--3},
  pages   = {227--233}
}

@misc{MiraftabMorinYuditsky2026,
  author = {Babak Miraftab and Pat Morin and Yelena Yuditsky},
  title  = {Basis number and pathwidth},
  year   = {2026},
  note   = {arXiv:2601.14095}
}

@book{MoharThomassen2001,
  author    = {Bojan Mohar and Carsten Thomassen},
  title     = {Graphs on surfaces},
  series    = {Johns Hopkins Studies in the Mathematical Sciences},
  publisher = {Johns Hopkins University Press},
  address   = {Baltimore, MD},
  year      = {2001}
}

@article{RichterShank1984,
  author  = {B. Richter and H. Shank},
  title   = {The cycle space of an embedded graph},
  journal = {Journal of Graph Theory},
  volume  = {8},
  year    = {1984},
  number  = {3},
  pages   = {365--369}
}

@article{Rizzi2009,
  author  = {Romeo Rizzi},
  title   = {Minimum weakly fundamental cycle bases are hard to find},
  journal = {Algorithmica},
  volume  = {53},
  year    = {2009},
  number  = {3},
  pages   = {402--424}
}

@article{Schmeichel1981,
  author  = {Edward F. Schmeichel},
  title   = {The basis number of a graph},
  journal = {Journal of Combinatorial Theory, Series B},
  volume  = {30},
  year    = {1981},
  number  = {2},
  pages   = {123--129}
}

@article{Seymour1980,
  author  = {P. D. Seymour},
  title   = {Decomposition of regular matroids},
  journal = {Journal of Combinatorial Theory, Series B},
  volume  = {28},
  year    = {1980},
  number  = {3},
  pages   = {305--359}
}

@article{Varshamov1957,
  author  = {R. R. Varshamov},
  title   = {Estimate of the number of signals in error correcting codes},
  journal = {Doklady Akademii Nauk SSSR},
  volume  = {117},
  year    = {1957},
  number  = {5},
  pages   = {739--741}
}

@inproceedings{WangIrani2026,
  author    = {Fan Wang and Sandy Irani},
  title     = {Cycle basis algorithms for reducing maximum edge participation},
  booktitle = {24th International Symposium on Experimental Algorithms (SEA 2026)},
  series    = {Leibniz International Proceedings in Informatics (LIPIcs)},
  volume    = {371},
  publisher = {Schloss Dagstuhl--Leibniz-Zentrum f\"ur Informatik},
  year      = {2026},
  pages     = {27:1--27:20}
}

@article{Wargo1996,
  author  = {Lawrence Wargo},
  title   = {Regular matroids with every circuit basis fundamental},
  journal = {Journal of Graph Theory},
  volume  = {21},
  year    = {1996},
  number  = {3},
  pages   = {327--334}
}

@article{Weiss1984,
  author  = {Alfred Weiss},
  title   = {Girths of bipartite sextet graphs},
  journal = {Combinatorica},
  volume  = {4},
  year    = {1984},
  number  = {2--3},
  pages   = {241--245}
}

@misc{LongPettieSaranurak2026,
  author = {Yaowei Long and Seth Pettie and Thatchaphol Saranurak},
  title  = {Deterministic Edge-Fault-Tolerant Connectivity Labeling Schemes with Nearly Optimal Label Size},
  year   = {2026},
  note   = {arXiv:2609.09031},
  doi    = {10.48550/arXiv.2609.09031},
  url    = {https://arxiv.org/abs/2609.09031}
}

@article{Welsh1969,
  author  = {D. J. A. Welsh},
  title   = {On the hyperplanes of a matroid},
  journal = {Proceedings of the Cambridge Philosophical Society},
  volume  = {65},
  number  = {1},
  pages   = {11--18},
  year    = {1969},
  doi     = {10.1017/S0305004100044017}
}

@misc{Knauer2026v1,
  author = {Kolja Knauer},
  title  = {Logarithmic basis number of graphs},
  year   = {2026},
  note   = {First arXiv version, 2 September 2026;
            \href{https://arxiv.org/abs/2609.02080v1}{arXiv:2609.02080v1}},
  url    = {https://arxiv.org/abs/2609.02080v1}
}

\end{document}